\DeclareSymbolFont{cyrletters}{OT2}{wncyr}{m}{n}
\DeclareMathSymbol{\Sha}{\mathalpha}{cyrletters}{"58}
\theoremstyle{plain}
\theoremstyle{definition}
\newtheorem{theorem}{Theorem}[subsection]
\newtheorem{thm}{Theorem}[subsection]
\newtheorem{lemma}[theorem]{Lemma}
\newtheorem{proposition}[theorem]{Proposition}
\newtheorem{prop}[theorem]{Proposition}
\theoremstyle{definition}
\newtheorem{definition}[theorem]{Definition}
\newtheorem{qu}[theorem]{Question}
\newtheorem{eg}[theorem]{Example}
\newtheorem{conjecture}[theorem]{Conjecture}
\theoremstyle{remark}
\newtheorem{remark}[theorem]{Remark}
\newtheorem{recall}[theorem]{Recall}
\renewcommand{\AA}{\mathbb{A}}
\newcommand{\QQ}{\mathbb{Q}}
\newcommand{\ZZ}{\mathbb{Z}}
\newcommand{\FF}{\mathbb{F}}
\newcommand{\GG}{\mathbb{G}}
\newcommand{\PP}{\mathbb{P}}
\newcommand{\Ecal}{{\mathcal E}}
\newcommand{\Lcal}{{\mathcal L}}
\newcommand{\Ocal}{{\mathcal O}}
\newcommand{\Lscr}{{\mathscr L}}
\newcommand{\half}{1/2}
\newcommand{\invap}{\inv_v(A(P_v))}
\newcommand{\invaq}{\inv_v(A(Q_v))}
\newcommand{\invaqo}{\inv_v(A(Q_0))}
\DeclareMathOperator{\inv}{inv}
\DeclareMathOperator{\Proj}{Proj}
\DeclareMathOperator{\Sym}{Sym}
\DeclareMathOperator{\Spec}{Spec}
\newcommand{\defi}[1]{\textsf{#1}} % for defined terms
\newcommand{\Br}{\textup{Br}}
\newcommand{\et}{\textup{\'et}}
\g@addto@macro\bfseries{\boldmath}  % This makes math in section titles bold.
\begin{document}
	
	\begin{title}
		{Non-invariance of weak approximation with Brauer--Manin obstruction}  %\'etale
	\end{title}
	\author{Han Wu}
	\address{University of Science and Technology of China,
		School of Mathematical Sciences,
		No.96, JinZhai Road, Baohe District, Hefei,
		Anhui, 230026. P.R.China.}
	\email{wuhan90@mail.ustc.edu.cn}
	\date{}
	%\thanks{The author was partially supported by USTC}
	\subjclass[2020]{11G35, 14G12, 14G25, 14G05.}
	% 11G05, , 14H25, 14H52, 14K15, 14J30
	\keywords{rational points, weak approximation, Brauer--Manin obstruction, Ch\^atelet surfaces, Ch\^atelet surface bundles over curves.}

	%\thanks{The authors were partially supported by University of Science and Technology of China}
	%\thanks{\textit{MSC 2010} : 11G35 14G05  14G25 14J20}

	% % % ----------------------------------------------------------------------

	% % % ----------------------------------------------------------------------

	\begin{abstract} 
		In this paper, we study weak approximation with Brauer--Manin obstruction with respect to extensions of number fields. For any nontrivial extension $L/K,$ assuming a conjecture of M. Stoll, we prove that there exists a $K$-threefold satisfying  weak approximation with Brauer--Manin obstruction off all archimedean places, while its base change to $L$ fails.
		Then we illustrate this construction with an explicit unconditional example.
	\end{abstract} 
	
	\maketitle

	\section{Introduction}
	
	\subsection{Background}
	Let $X$ be a proper algebraic variety (\cite[Definition P. 105]{Ha97}) defined over a number field $K.$ Let $\Omega_K$ be the set of all nontrivial places of $K.$ Let $\infty_K\subset \Omega_K$ be the subset of all archimedean places, and let $S\subset \Omega_K$ be a finite subset. Let $K_v$ be the completion of $K$ at $v\in \Omega_K.$ Let $\AA_K$ be the ring of ad\`eles of $K.$ 
	We assume that $X(K)\neq\emptyset.$  Let $pr^S\colon\AA_K\to \AA_K^S$ be the natural projection of ad\`eles to ad\`eles without $S$ components, which induces a natural projection $pr^S\colon X(\AA_K)\to X(\AA_K^S).$ Since $X$ is proper, the set of adelic points $X(\AA_K^S)$ is equal to $\prod_{v\in \Omega_K\backslash S}X(K_v),$ and the adelic topology of $X(\AA_K^S)$  is indeed the product of  $v$-adic topologies. %There is a natural diagonal embedding of the set of rational points $X(K)$ into the set of adelic points $X(\AA_K^S).$ 
	Viewing $X(K)$ as a subset of $X(\AA_K)$ (respectively of $X(\AA_K^S)$) by the diagonal embedding, we say that $X$
	satisfies \defi{weak approximation} (respectively \defi{weak approximation off $S$}) if $X(K)$ is dense in $X(\AA_K)$ (respectively in $X(\AA_K^S)$), cf. \cite[Chapter 5.1]{Sk01}. 
	
	Cohomological obstructions have been used to explain the failure of density of $X(K)$ in $X(\AA_K^S).$  Let $\Br(X)=H^2_{\et}(X,\GG_m)$ be the Brauer group of $X.$ The Brauer--Manin pairing
		\begin{align*}
		X(\AA_K)\times\Br(X)&\to \QQ/\ZZ\\
		(A,\{P_v\}_{v\in \Omega_K})&\mapsto \sum\limits_{v\in\Omega_K}\inv_v(A(P_v)),
	\end{align*}
	suggested by Manin \cite{Ma71}, is provided by local class field theory. The left kernel of this pairing is denoted by $X(\AA_K)^{\Br},$ which is a closed subset of $X(\AA_K).$ By the global reciprocity in class field theory, there is an exact sequence: 
	$$0\to \Br(K)\to \bigoplus\limits_{v\in \Omega_K} \Br(K_v)\to \QQ/\ZZ\to 0.$$
	It induces an inclusion: $X(K)\subset pr^S(X(\AA_K)^\Br).$ 
	We say that $X$ satisfies \defi{weak approximation with Brauer--Manin obstruction} (respectively \defi{with Brauer--Manin obstruction off $S$}) if $X(K)$ is dense in $X(\AA_K)^\Br$ (respectively in $pr^S(X(\AA_K)^\Br)$).
	
	 For an elliptic curve over $\QQ$ of analytic rank one, it satisfies weak approximation with Brauer--Manin obstruction, cf. \cite{Wa96}. For an abelian variety defined over $K,$ if its Tate--Shafarevich group is finite, then it satisfies weak approximation with Brauer--Manin obstruction off $\infty_K,$ cf. \cite[Proposition 6.2.4]{Sk01}. 
	For any smooth, proper and rationally connected variety defined over a number field, it is conjectured by Colliot-Th\'el\`ene \cite{CT03} that it satisfies weak approximation with Brauer--Manin obstruction. Colliot-Th\'el\`ene's conjecture holds for Ch\^atelet surfaces, cf. \cite{CTSSD87a,CTSSD87b}.
	For any smooth, projective, and geometrically connected curve defined over a number field $K,$ it is conjectured by Stoll \cite[Conjecture 9.1]{St07} that it satisfies weak approximation with Brauer--Manin obstruction off $\infty_K\colon$ see Conjecture \ref{conjecture Stoll} for more details.

	\subsection{Question}\label{Question}
	Given a nontrivial extension of number fields $L/K,$ and a finite subset $S\subset\Omega_K,$ let $S_L\subset \Omega_L$ be the subset of all places above $S.$ Let $X$ be a smooth, projective, and geometrically connected variety defined over $K.$ Let $X_L=X\times_{\Spec K} {\Spec L}$ be the base change of $X$ to $L.$ In this paper, we consider the following question.
	\begin{qu}\label{question on WA1}
		If a smooth, projective, and geometrically connected variety $X$ has a $K$-rational point, and satisfies weak approximation with Brauer--Manin obstruction off $S,$ must $X_L$ also satisfy weak approximation with Brauer--Manin obstruction off $S_L?$	
	\end{qu}

	\subsection{A negative answer to Question \ref{question on WA1}} 
	For any number field $K$ and some quadratic extension $L/K,$ assuming \cite[Conjecture 9.1]{St07}, a Ch\^atelet surface bundle over a curve was constructed by Liang\cite{Li18}, which gives a negative answer to Question \ref{question on WA1}.
	Also an unconditional example with explicit equations was given for $K=\QQ$ and $L=\QQ(\sqrt{5})$ in loc. cit. We generalize Liang's work to any nontrivial extension of number fields in this paper.
	
	For any nontrivial extension of number fields $L/K,$ assuming \cite[Conjecture 9.1]{St07}, we have the following theorem to give a negative answer to Question \ref{question on WA1}.
	\begin{thm}[Theorem \ref{theorem main result: non-invariance of weak approximation with BMO}]
		For any nontrivial extension of number fields $L/K,$ and any finite subset $T\subset \Omega_L,$ assuming \cite[Conjecture 9.1]{St07}, there exist a curve $C$ and a Ch\^atelet surface bundle: $X\to C$ defined over $K$ such that
		\begin{itemize}
			\item $X$ has a $K$-rational point, and satisfies weak approximation with Brauer--Manin obstruction %, \'etale Brauer--Manin obstruction
			off $\infty_K,$
			\item $X_L$ does not satisfy weak approximation with Brauer--Manin obstruction %, \'etale Brauer--Manin obstruction
			off $T.$
		\end{itemize}
	\end{thm}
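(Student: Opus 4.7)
The plan is to mimic Liang's construction for quadratic extensions \cite{Li18}, generalizing it to an arbitrary nontrivial extension $L/K$. One builds the threefold $X$ as a Ch\^atelet surface bundle $\pi\colon X\to C$ over a suitable smooth, projective, geometrically connected curve $C/K$, arranged so that Stoll's conjecture \cite[Conjecture 9.1]{St07} controls the arithmetic of $X$ over $K$ through the base curve $C$, while a Brauer class attached to a special fiber of $\pi$ produces a new obstruction after base change to $L$, localized at a single auxiliary place outside $T$.

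The first step is to extract the local datum needed from the extension $L/K$. Since $L/K$ is a nontrivial extension of number fields, Chebotarev supplies infinitely many finite places $v$ of $K$ admitting a prime $w$ of $L$ above $v$ with $[L_w:K_v]>1$; as $T$ is finite, one picks such a pair $(v_0,w_0)$ with $w_0\notin T$. The restriction $\Br(K_{v_0})\to\Br(L_{w_0})$ is multiplication by $[L_{w_0}:K_{v_0}]$ on $\QQ/\ZZ$, so elements of $\Br(K_{v_0})$ with nonzero image in $\Br(L_{w_0})$ exist. One then realizes this datum globally: using the class-field-theoretic exact sequence $0\to\Br(K)\to\bigoplus_{v}\Br(K_v)\to\QQ/\ZZ\to 0$, one constructs a Ch\^atelet surface $V_0/K$ whose Brauer group modulo constants is generated by a class $\beta_0$ whose invariants vanish at every place of $K$ but whose pullback has nontrivial invariant at $w_0$. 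The theorems of Colliot-Th\'el\`ene--Sansuc--Swinnerton-Dyer \cite{CTSSD87a,CTSSD87b} then guarantee that $V_0$ itself satisfies weak approximation with Brauer--Manin obstruction over $K$.

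With $V_0$ in hand, one picks a curve $C/K$ with a distinguished $K$-rational point $P_0$ and applies the standard Ch\^atelet surface bundle construction (in the style of Poonen and of Colliot-Th\'el\`ene, P\'al, and Skorobogatov) to produce $\pi\colon X\to C$ whose fiber over $P_0$ is isomorphic to $V_0$ and whose remaining fibers are smooth Ch\^atelet surfaces satisfying Colliot-Th\'el\`ene's conjecture. For such bundles a fiber-lifting argument reduces weak approximation with Brauer--Manin obstruction for $X$ off a finite set $S$ to the analogous property for $C$ off $S$ together with weak approximation for the generic fibers. Over $K$, Stoll's conjecture applied to $C$ off $\infty_K$ combined with the vanishing of the $K$-local invariants of $\beta_0$ yields the first bullet. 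Over $L$, one exhibits an adelic point of $X_L$ pairing nontrivially with the pullback of $\beta_0$ only at $w_0$, giving an element of the Brauer--Manin set off $T$ (as $w_0\notin T$) that cannot be approximated by $L$-rational points.

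The chief obstacle is the second step: for an arbitrary nontrivial extension $L/K$, one must simultaneously prescribe the vanishing of $\beta_0$ at every $K$-place and realize it by a geometrically tractable Ch\^atelet surface whose remaining fibers in the bundle are uniformly well-behaved. In the quadratic case this is handled cleanly by a quaternion symbol of the form $(a,\cdot)$ with $L=K(\sqrt{a})$; in general one must replace it by a cyclic or more elaborate Brauer class of appropriate order, and carefully balance local invariants so as to achieve the required global vanishing over $K$ while keeping the invariant at $w_0$ alive over $L$. Once this delicate balancing is in place, the two bullets follow from the general fiber-lifting principle plus the two conjectures already invoked.
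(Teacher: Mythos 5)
Your high-level skeleton (a Ch\^atelet surface bundle over a curve, Stoll's conjecture controlling the base, a fiber-lifting lemma in each direction, and Poonen's construction to force $\Br(C_L)\xrightarrow{\sim}\Br(X_L)$) is the same as the paper's. But the heart of the theorem is precisely the step you defer as ``the chief obstacle'': producing the special Ch\^atelet surface. The mechanism you propose for it does not work as stated. The restriction map $\Br(K_{v_0})\to\Br(L_{w_0})$ is multiplication by $[L_{w_0}:K_{v_0}]$ on $\QQ/\ZZ$, so it can only \emph{kill} local invariants, never create them; a class whose evaluation vanishes identically on $V_0(K_{v_0})$ could only acquire a nonzero invariant at $w_0$ through $L_{w_0}$-points that do not come from $K_{v_0}$-points, and your argument never engages with that set, nor do you give any construction realizing it. You also leave unverified that such a class survives in $\Br(V_{0L})/\Br(L)$ at all (it dies, for instance, if $a$ becomes a square in $L$), and your application of the positive fiber-lifting lemma needs the fibers over \emph{every} $K$-point of $C$ to satisfy plain weak approximation off $\infty_K$ --- weak approximation with Brauer--Manin obstruction, which is all CTSSD gives for a general Ch\^atelet surface, is not sufficient there.

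The paper's mechanism is essentially the opposite of yours, and it dictates a different architecture. One chooses a prime element $p_1$ that \emph{splits completely} in $L$ (totally positive, $\equiv 1 \bmod 8\Ocal_K$, with no place of $T$ above it) and $p_2$ with $(p_1,p_2)_{v_{p_1}}=-1$; for $V_0\colon y^2-p_1z^2=(p_2x^2+1)((1+p_2/p_1^2)x^2+1/p_1^2)$ the generator $A=(p_1,p_2x^2+1)$ has evaluation $0$ everywhere except at $v_{p_1}$, where it takes both values $0$ and $1/2$. Since $L_{w}=K_{v_{p_1}}$ for each of the $[L:K]\ge 2$ places $w\mid p_1$, the same two-valuedness persists at each such $w$, and over $L$ one prescribes the value $1/2$ at one of them and $0$ at the others to produce a nonempty open set disjoint from $V_{0L}(\AA_L)^{\Br}$: this is what kills weak approximation for $V_{0L}$ off $T\cup\infty_L$. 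Because this $V_0$ already fails weak approximation over $K$ at $v_{p_1}$, it cannot appear as the fiber over a $K$-point of $C$ (that would wreck the first bullet), which is why the paper needs a curve of type $I$ (with $C(K)\subsetneq C(L)$ both finite and Stoll holding for $C$) together with the pencil $s'=u_0^2\tilde P_\infty+u_1^2\tilde P_0$: the fibers over $C(K)$ are the auxiliary surface $V_\infty$, which satisfies plain weak approximation because $P_\infty(x)=(1-x^2)(x^2-a)$ contains the factor $x^2-a$, while the fibers over $C(L)\setminus C(K)$ are $V_{0L}$. Your single-surface, single-$K$-point design omits all of these ingredients, so the proposal has a genuine gap at its central step.
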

	
	For $K=\QQ$ and $L=\QQ(\sqrt{3}),$ based on the method given in Theorem \ref{theorem main result: non-invariance of weak approximation with BMO}, we give an explicit unconditional example  in Section \ref{section main example1}. One can use Liang's method to construct an explicit unconditional example satisfying Theorem \ref{theorem main result: non-invariance of weak approximation with BMO} for $\QQ(\sqrt{3})/\QQ.$ But, we use different method to prove Theorem \ref{theorem main result: non-invariance of weak approximation with BMO} and construct an explicit unconditional example.
	The $3$-fold $X$ is a smooth compactification of the following $3$-dimensional affine subvariety in $\AA^5$ with affine coordinates $(x,y,z,x',y')$ given by equations 
		\begin{equation*}
		\begin{cases}
			y^2-73z^2=(1-x^2)(x^2-73)(x'-4)^2+(5x^2+1)(5334x^2/5329+1/5329)\\ 
			{y'}^2={x'}^3-16
		\end{cases}.
	\end{equation*}

	% Although for these two fields, using the method given in \cite[Theorem 4.5]{Li18},  man can construct an explicit unconditional example having the properties of Theorem \ref{theorem main result: non-invariance of weak approximation with BMO}, the method that we use is different.
	
	%	Let $\zeta_7$ be a primitive $7$-th root of unity.
	%	For $K=\QQ$ and $L=\QQ(\zeta_7+\zeta_7^{-1}),$ based on the method given in Theorem \ref{theorem main result: non-invariance of weak approximation with BMO}, we give an explicit unconditional example  in Subsection \ref{section main example1}.  The method given in \cite[Theorem 4.5]{Li18} no longer works. The $3$-fold $X$ is a smooth compactification of the following $3$-dimensional affine subvariety given by equations 
	%	\begin{equation*}
	%		\begin{cases}
	%			
	%			
	%			
	%			
	%			
	%			
	%			
	%			
	%			y^2-73z^2=(1-x^2)(x^2-73)(x'-4)^2+(99x^2+1)(\frac{5428}{5329}x^2+\frac{1}{5329})\\ 
	%			{y'}^2={x'}^3-16
	%		\end{cases}
	%	\end{equation*}
	%	in $\AA^5$ with affine coordinates $(x,y,z,x',y').$
	%	

%	\subsubsection{Main ideas behind our construction in the proof of Theorem \ref{theorem main result: non-invariance of weak approximation with BMO}}
We explain the idea of proving  Theorem \ref{theorem main result: non-invariance of weak approximation with BMO}.
	Given a nontrivial extension of number fields $L/K,$
	we start with a curve $C$ such that $C(K)$ and $C(L)$ are both finite, nonempty and $C(K)\neq C(L).$ Combining arithmetic of Ch\^atelet surfaces with a construction method from Poonen \cite{Po10}, we choose two Ch\^atelet surfaces denoted by $V_0$ and $V_\infty$ over $K,$ and construct  a Ch\^atelet surface bundle: $\beta \colon X\to C$ such that the fiber of each point in $C(K)$ is isomorphic to $V_\infty,$ and that the fiber of each point in $C(L)\backslash C(K)$ is isomorphic to $V_{0L}.$ 
	Roughly speaking, $X(\AA_K)^\Br$ (respectively $X_L(\AA_L)^\Br$) is the union of adelic points of rational fibers by  \cite[Proposition 5.4]{Po10} together with \cite[Conjecture 9.1]{St07}. Using  fibration methods, the arithmetic properties of $V_\infty$ and $V_0$ will determine that of $X.$ 
	
%	, and make full use of this real place information. 	
%	More exactly, we choose a Ch\^atelet surface $V_0$ such that it has local points in all but the given real place, and choose a Ch\^atelet surface $V_\infty$ such that it has local points in all but some splitting finite place. This will make sure $X(L)=\emptyset,$ $X(\AA_K)^{\Br}=\emptyset,$ but $X(\AA_K)\neq\emptyset.$ Then
%	we modify an adelic point attached to one $C(L)\backslash C(K)$ point at the real place by one $C(K)$ point, 	
%	 $X(\AA_K)\neq \emptyset.$\\
%	By the same argument as in the proof of Theorem \ref{theorem main result: non-invariance of the Hasse principle with BMO for odd degree}, the set $X(\AA_K)^{\Br}=\emptyset.$

	\section{Notation and preliminaries}
\subsection{Notation}	Given a number field $K,$ let $\Ocal_K$ be the ring of its integers, and let $\Omega_K$ be the set of all its nontrivial places. Let $\infty_K\subset \Omega_K$ be the subset of all archimedean places, and let $2_K\subset \Omega_K$ be the subset of all $2$-adic places. Let $\infty_K^r\subset \infty_K$ be the subset of all real places. Let $\Omega_K^f=\Omega_K\backslash \infty_K$ be the set of all finite places of $K.$ An \defi{odd place} will mean a finite place not contained in $2_K.$	
	Let $K_v$ be the completion of $K$ at  $v\in \Omega_K.$ %and let $\AA_K$ be the ring of ad\`eles of $K.$ 	
	%If an element $a\in \Ocal_K$ is a prime element, we denote its prime ideal by $\pfr_a$ and its associated valuation by $v_a.$
	For $v\in \infty_K,$ let $\tau_v\colon K\hookrightarrow K_v$ be the embedding of $K$ into its completion. For $v\in \Omega_K^f,$ let $\Ocal_{K_v}$ be its valuation ring, and let $\FF_v$ be its residue field. %and $\pfr_v$ be the prime ideal associated to $v.$  
	We say that an element is a \defi{prime element}, if the ideal generated by this element is a prime ideal. For a prime element $p\in \Ocal_K,$ we denote %its prime ideal by $\pfr_a,$ and denote 
	its associated place by $v_p.$
	 Let $\AA_K$ (respectively $\AA_K^S$) be the ring of ad\`eles (ad\`eles without $S$ components) of $K.$

	\subsection{Hilbert symbol}

	 We use the Hilbert symbol $(a,b)_v\in \{\pm 1\},$ for any $a,b\in K_v^\times$ and $v\in \Omega_K.$  By definition, $(a,b)_v=1$ if and only if $x_0^2-ax_1^2-bx_2^2=0$ has a $K_v$-solution in $\PP^2$ with homogeneous coordinates $(x_0:x_1:x_2),$ which equivalently means that the curve defined over $K_v$ by the equation $x_0^2-ax_1^2-bx_2^2=0$ in $\PP^2,$ is isomorphic to $\PP^1.$ The Hilbert symbol gives a symmetric bilinear form on $K_v^\times/K_v^{\times 2}$ with value in $\ZZ/2\ZZ,$ cf. \cite[Chapter XIV, Proposition 7]{Se79}. 
	
 We state the following lemmas for the Hilbert symbol. The first two lemmas have already been given in the paper \cite{Wu22a}. We give the statement here for the convenience of the reader.

	\begin{lemma}\label{lemma hilbert symbal lifting for odd prime}
		Let $K$ be a number field, and let $v$ be an odd place of $K.$ Let $a, b\in K_v^\times$ such that $v(a),v(b)$ are even. Then $(a,b)_v=1.$
	\end{lemma}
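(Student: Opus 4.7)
The plan is to reduce the statement to the well-known fact that the Hilbert symbol of two units at a non-dyadic place is trivial, and then to justify that latter fact via Hensel's lemma.

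First, I would invoke the bilinearity of the Hilbert symbol on $K_v^\times/K_v^{\times 2}$. Since $v(a)$ and $v(b)$ are even, writing $a = \pi^{2m} u$ and $b = \pi^{2n} w$ for a uniformizer $\pi \in \Ocal_{K_v}$ and units $u, w \in \Ocal_{K_v}^\times$, the classes of $a$ and $b$ in $K_v^\times/K_v^{\times 2}$ coincide with the classes of $u$ and $w$. Hence $(a,b)_v = (u,w)_v$, and the problem is reduced to proving $(u,w)_v = 1$.

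To prove this, I would look for a nontrivial solution of $x_0^2 - u x_1^2 - w x_2^2 = 0$ in $K_v$. Setting $x_2 = 1$ and reducing modulo $v$, I would consider the equation $x_0^2 - \bar{u} x_1^2 = \bar{w}$ over the finite field $\FF_v$. Since $v$ is an odd place, $|\FF_v|$ is odd, so the sets $\{x_0^2 : x_0 \in \FF_v\}$ and $\{\bar{w} + \bar{u} x_1^2 : x_1 \in \FF_v\}$ each have cardinality $(|\FF_v|+1)/2$; by a pigeonhole/counting argument they must intersect, yielding a solution $(\bar{x}_0, \bar{x}_1)$ over $\FF_v$ with not both coordinates zero (in fact one can arrange $\bar{x}_0 \neq 0$ or $\bar{x}_1 \neq 0$). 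Then I would apply Hensel's lemma to lift this to a solution $(x_0, x_1, 1) \in K_v^3$, showing that the conic has a $K_v$-point, hence $(u,w)_v = 1$.

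The only mildly delicate point is guaranteeing that the smooth-point hypothesis of Hensel's lemma is satisfied, i.e.\ that one can choose the mod-$v$ solution so that the partial derivative with respect to either $x_0$ or $x_1$ is a unit; this is automatic as soon as the chosen reduction has $\bar{x}_0 \neq 0$ or $\bar{x}_1 \neq 0$, which the counting argument delivers. Alternatively, one could simply cite Serre's explicit formula for the Hilbert symbol at a non-dyadic place (\cite[Chapter XIV, \S3]{Se79}), from which the result follows by direct substitution of $\alpha = 2m$, $\beta = 2n$. I expect the bookkeeping of the counting/Hensel step to be the only real obstacle, and it is entirely standard.
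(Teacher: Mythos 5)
Your proof is correct. Note that the paper itself gives no proof of this lemma: it only states it ``for the convenience of the reader'' and refers to \cite{Wu22a}, so there is no in-paper argument to compare against. Your route --- reducing to the case of two units via bilinearity of the Hilbert symbol on $K_v^\times/K_v^{\times 2}$, then producing an $\FF_v$-point on the reduced conic by the $(|\FF_v|+1)/2$ counting argument and lifting it by Hensel's lemma (valid because $2$ and the relevant partial derivative are units at an odd place) --- is the standard one, and the fallback via Serre's explicit formula for the tame Hilbert symbol is equally valid. The one point worth making explicit is that the mod-$v$ solution cannot have $\bar{x}_0=\bar{x}_1=0$ because $\bar{w}\neq 0$, which is exactly what guarantees the smoothness hypothesis for Hensel; you address this, so the argument is complete.
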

	
%	\begin{proof}
%		Choose a prime element $\pi_v\in K_v.$ Let $a_1=a\pi_v^{-v(a)}$ and $b_1=b\pi_v^{-v(b)}.$ Since the valuations $v(a)$ and $v(b)$ are even, the elements $\pi_v^{-v(a)}$ and $\pi_v^{-v(b)}$ are in $K_v^{\times 2}.$ So $(a,b)_v=(a_1,b_1)_v$ and $a_1,b_1\in \Ocal_{K_v}^\times.$
%		By Chevalley--Warning theorem (cf. \cite[Chapter I \S 2, Corollary 2]{Se73}), the equation $x_0^2-\bar{a}_1x_1^2-\bar{b}_1x_2^2=0$ has a nontrivial solution in $\FF_v.$ Since $v$ is odd, by Hensel's lemma, this solution can be lifted to a nontrivial solution in $\Ocal_{K_v}.$ Hence $(a,b)_v=(a_1,b_1)_v=1.$
%	\end{proof}
	
	\begin{lemma}\label{lemma Hensel lemma for Hilbert symbal}
		Let $K$ be a number field, and let $v$ be an odd place of $K.$ Let $a,b,c\in K_v^\times$ such that $v(b)<v(c).$ Then $(a,b+c)_v=(a,b)_v.$
	\end{lemma}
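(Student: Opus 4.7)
The plan is to reduce the identity to showing that $1+c/b$ is a square in $K_v^\times$, and then to verify this by a direct application of Hensel's lemma.

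First I would factor out $b$: since $v(c)>v(b)$, the element $t:=c/b$ satisfies $v(t)>0$, and $b+c=b(1+t)$. Using bilinearity of the Hilbert symbol (as recalled from \cite[Chapter XIV, Proposition 7]{Se79}), this gives
\begin{equation*}
(a,b+c)_v=(a,b)_v\cdot (a,1+t)_v.
\end{equation*}
So the lemma reduces to proving $(a,1+t)_v=1$, and for this it suffices to show that $1+t$ is a square in $K_v^\times$, since then the conic $x_0^2-ax_1^2-(1+t)x_2^2=0$ has the obvious $K_v$-point.

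Next I would verify that $1+t$ is a square by Hensel. Consider $f(X)=X^2-(1+t)\in \Ocal_{K_v}[X]$. Then $f(1)=-t$ has valuation $v(t)\geq 1$, while $f'(1)=2$ is a unit because $v$ is an odd place. The standard form of Hensel's lemma (the hypothesis $v(f(1))>2v(f'(1))=0$) then produces a root of $f$ in $\Ocal_{K_v}$, so $1+t\in K_v^{\times 2}$, and $(a,1+t)_v=1$ as required.

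The argument has no real obstacle: the only subtlety is remembering that Hensel's criterion in this form requires $v(f(1))>2v(f'(1))$, which is where the hypothesis that $v$ be odd (so that $v(2)=0$) is used. In particular, the statement can fail at $2$-adic places, which is why the restriction to odd $v$ is essential and parallels the hypothesis in Lemma \ref{lemma hilbert symbal lifting for odd prime}.
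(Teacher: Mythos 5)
Your proof is correct. The paper itself does not prove this lemma (it is quoted from \cite{Wu22a} without proof), but your argument --- factoring $b+c=b(1+c/b)$, using bilinearity of the Hilbert symbol, and showing $1+c/b$ is a square via Hensel's lemma at an odd place --- is the standard and surely intended one, and you correctly identify where the hypothesis that $v$ is odd enters (so that $f'(1)=2$ is a unit).
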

	
%	\begin{proof}
%		For $v(b)<v(c),$ we have $v(b^{-1}c)>0.$ By Hensel's lemma, we have $1+b^{-1}c\in K_v^{\times 2}.$ So $(a,b+c)_v=(a,b(1+b^{-1}c))_v=(a,b)_v.$
%	\end{proof}
	
		\begin{lemma}\label{lemma (p,-1)=1}
		Let $K$ be a number field. Let $p$ be its prime element such that 
		\begin{itemize}
			\item $\tau_v(p)>0$ for all $v\in \infty_K^r,$
			\item $p\equiv 1 \mod 8\Ocal_K.$
		\end{itemize}
		Then $(p,-1)_{v_{p}}=1.$
	\end{lemma}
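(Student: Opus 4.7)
The plan is to exploit Hilbert reciprocity $\prod_{v\in\Omega_K}(p,-1)_v=1$: it suffices to verify that $(p,-1)_v=1$ for every place $v\neq v_p$, since then $(p,-1)_{v_p}$ is forced to equal $1$ as well. I will check this one family of places at a time.

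For an archimedean $v$, the symbol is trivial when $v$ is complex, and at a real place the hypothesis $\tau_v(p)>0$ makes $p$ a positive element of $K_v\cong\RR$, hence a square, so $(p,-1)_v=1$. For an odd finite place $v\neq v_p$, the assumption that $p$ generates the prime ideal associated to $v_p$ gives $v(p)=0$, and of course $v(-1)=0$; both valuations being even, Lemma \ref{lemma hilbert symbal lifting for odd prime} yields $(p,-1)_v=1$ directly.

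For a $2$-adic place $v\in 2_K$, the congruence $p\equiv 1\pmod{8\Ocal_K}$ passes to $v(p-1)\geq v(8)=3v(2)$. I apply Hensel's lemma to $f(X)=X^2-p$ at the approximate root $X=1$: one has
$$v(f(1))=v(1-p)\geq 3v(2)>2v(2)=2v(f'(1)),$$
the strict inequality holding because $v(2)>0$ whenever $v\mid 2$. Thus $p$ admits a square root in $K_v$, and $(p,-1)_v=1$. Combining the three cases with reciprocity proves the lemma.

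The step I expect to require the most care is the $2$-adic one. The familiar statement ``$p\equiv 1\pmod 8$ implies $p$ is a square in $\QQ_2$'' uses $v(2)=1$, whereas a $2$-adic completion $K_v$ may be ramified over $\QQ_2$, so one must interpret the hypothesis at $v$ and verify the Hensel inequality uniformly across ramification indices. Fortunately the modulus $8\Ocal_K$ is strong enough: the bound $v(1-p)\geq 3v(2)$ strictly exceeds $2v(2)$ as soon as $v(2)>0$, so Hensel's lemma applies at every $2$-adic place without further assumption.
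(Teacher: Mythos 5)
Your proof is correct and follows essentially the same route as the paper: verify $(p,-1)_v=1$ at all places $v\neq v_p$ (real places via $\tau_v(p)>0$, odd places via the even-valuation lemma, $2$-adic places via $p\in K_v^{\times 2}$) and conclude by the product formula. The only difference is that you spell out the Hensel's lemma computation showing $p\equiv 1 \bmod 8\Ocal_K$ forces $p$ to be a square at every $2$-adic place, a step the paper asserts without detail; your verification of the inequality $v(1-p)\geq 3v(2)>2v(2)$ is the right uniform justification across ramification indices.
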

	\begin{proof}
		The condition that $p\equiv 1 \mod 8\Ocal_K$ implies $p\in K_v^{\times 2}$ for all $v\in 2_K.$ By the product formula $\prod_{v\in \Omega_K} (p,-1)_v=1$ and Lemma \ref{lemma hilbert symbal lifting for odd prime}, this lemma follows.
	\end{proof}

		\subsection{Poonen's proposition}

	Since our result is based on  \cite[Proposition 5.4]{Po10}. We recall Poonen's general result. There exist some remarks on it in \cite[Section 4.1]{Li18}. Colliot-Th\'el\`ene \cite[Proposition 2.1]{CT10} gave another proof of his proposition.

	\begin{recall}\label{recall of Poonen's notation}
		Let $B$ be a smooth, projective, and geometrically connected variety over a number field $K.$ Let $\Lcal$ be a line bundle on $B$ such that the set of global sections $\Gamma(B,\Lcal^{\otimes2})\neq 0.$ Let $\Ecal=\Ocal_B\oplus\Ocal_B\oplus \Lcal.$ Let $a$ be a constant in $K^\times,$ and let $s$ be a nonzero global section in $\Gamma(B,\Lcal^{\otimes2}).$ The zero locus of $(1,-a,-s)\in\Gamma(B,\Ocal_B\oplus\Ocal_B\oplus\Lcal^{\otimes2})\subset\Gamma(B,\Sym^2\Ecal)$ in the projective space bundle $\Proj(\Ecal)$ is  a projective and geometrically integral variety, denoted by $X$ with the natural projection $X\to B.$ Let $\overline{K}$ be an algebraic closure of $K.$ Denote $B\times_{\Spec K} {\Spec \overline{K}}$ by $\overline{B}.$
	\end{recall}	
	\begin{prop}\cite[Proposition 5.3]{Po10}\label{Poonen's main proposition}
		Given a number field $K,$ we use the notation as in Recall \ref{recall of Poonen's notation}. Let $\alpha\colon X\to B$ be the natural projection.
		Assume that 
		\begin{itemize}
			\item the locus defined by $s=0$ in $B$ is smooth, projective, and geometrically connected,
			\item $\Br\overline{B}=0$ and $X(\AA_K)\neq \emptyset.$
		\end{itemize}
		Then $X$ is smooth, projective, and geometrically connected. Moreover, the group homomorphism $\alpha^*\colon\Br(B)\to \Br(X)$ is an isomorphism. 
	\end{prop}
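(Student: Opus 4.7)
The plan splits into two parts: first establish that $X$ is smooth, projective, and geometrically connected, then prove $\alpha^*\colon\Br(B)\to\Br(X)$ is an isomorphism via the Leray spectral sequence for the fibration.

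For the geometric part, projectivity is immediate since $X$ is a closed subscheme of $\Proj(\Ecal)$, which is projective over the projective base $B$. For smoothness I would work locally on a Zariski open $U\subset B$ trivializing $\Lcal$, writing the defining equation as $x_0^2-ax_1^2-sx_2^2=0$ with $s\in\Gamma(U,\Ocal_U)$. The Jacobian criterion forces any singular point to satisfy $x_0=x_1=0$ and $sx_2=0$, leaving only the point $(0:0:1)$ in the fiber, and only where $s=0$. At such a point the partial derivatives along $B$ reduce to $-(\partial s)\,x_2^2$ with $x_2\neq 0$, so their simultaneous vanishing cuts out exactly the singular locus of the divisor $\{s=0\}$; the hypothesis that this divisor is smooth then gives smoothness of $X$. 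Geometric connectedness follows because the generic fiber of $\overline{\alpha}$ is a smooth plane conic, hence isomorphic to $\PP^1$ over the function field; Stein factorization together with properness of $\alpha$ and the geometric connectedness of $B$ then forces $\overline{X}$ to be connected.

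For the Brauer group isomorphism, the plan is to analyze the Leray spectral sequence
\[
E_2^{p,q}=H^p_{\et}(B,R^q\alpha_*\GG_m)\Rightarrow H^{p+q}_{\et}(X,\GG_m).
\]
Properness of $\alpha$ with geometrically reduced, connected fibers gives $\alpha_*\GG_m=\GG_m$. The geometric fibers of $\overline{\alpha}$ are either smooth conics (isomorphic to $\PP^1$) or pairs of $\PP^1$'s meeting transversally, and in both cases their Brauer groups vanish; combined with the hypothesis $\Br\overline{B}=0$, this should force $\Br\overline{X}=0$. The sheaf $R^1\alpha_*\GG_m$ on $\overline{B}$ has generic rank one and jumps along $\{s=0\}$ because of the reducible fibers; a careful study of its global sections and of the low-degree terms of the spectral sequence on $\overline{B}$ identifies the relative Picard contribution. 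Finally, the Hochschild--Serre spectral sequence for the base change to $\overline{K}$, combined with the existence of an adelic point on $X$ (which splits the arithmetic-to-geometric sequence and kills any $\Br(K)$-discrepancy), promotes the geometric vanishing to the desired isomorphism $\alpha^*\colon\Br(B)\xrightarrow{\sim}\Br(X)$.

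The main obstacle will be handling $R^1\alpha_*\GG_m$ along the singular divisor $\{s=0\}$ and its contribution to $H^2_{\et}(B,\GG_m)$, together with pinpointing exactly where the adelic-point hypothesis intervenes to control obstruction classes and ensure $\alpha^*$ is surjective; everything else is either standard Jacobian-criterion bookkeeping or a formal consequence of the spectral sequences.
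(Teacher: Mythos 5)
The paper offers no proof of this statement: it is quoted from \cite[Proposition 5.3]{Po10} (an alternative proof is in \cite{CT10}), so the comparison below is with those standard arguments. Your geometric part is correct and complete: projectivity is clear, the Jacobian computation correctly shows that the only candidate singular points are $(0:0:1)$ over $\{s=0\}$ and that the base-direction partials there reduce to $-x_2^2\,ds$, so smoothness of $X$ is equivalent to smoothness of the divisor $\{s=0\}$; connectedness of $\overline{X}$ follows as you say. This matches the standard proof.

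The Brauer-group part has a genuine gap. The decisive hypothesis --- that the locus $\{s=0\}$ is \emph{geometrically connected} --- is never invoked in your outline, and without it the conclusion is false: a Ch\^atelet surface is exactly this construction with $B=\PP^1$ and $Z$ a length-$4$ \'etale scheme, and there $\Br(V)/\Br(K)\cong\ZZ/2\ZZ$ while $\Br(\PP^1)=\Br(K)$, so $\alpha^*$ fails to be surjective even though $\Br\overline{B}=0$ and adelic points may exist. The irreducibility of $\overline{Z}$ must enter precisely in the step you defer to ``a careful study,'' namely the contribution of the two vertical divisor classes $D_1,D_2$ lying over $\overline{Z}$ (swapped by $\Gal(K(\sqrt{a})/K)$ and subject to $D_1+D_2=\alpha^*\overline{Z}$) to $H^1(K,\Pic\overline{X})$, equivalently to $H^1(\overline{B},R^1\alpha_*\GG_m)$ and its descent; when $\overline{Z}$ is irreducible this contribution vanishes, when it has several components it does not. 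Two further problems with the plan as written: there is no proper base change theorem for $\GG_m$, so the stalks of $R^2\alpha_*\GG_m$ are not the Brauer groups of the geometric fibres and the inference ``fibres and base have trivial Brauer groups, hence $\Br\overline{X}=0$'' does not follow from the Leray spectral sequence as stated (the clean route is that over $\overline{K}$ the conic acquires the section $x_2=0,\ x_0=\sqrt{a}\,x_1$, so $\overline{X}$ is birational to $\overline{B}\times\PP^1$ and $\Br\overline{X}=0$ by birational invariance of the Brauer group of smooth proper varieties in characteristic zero); and an adelic point does not ``split'' the Hochschild--Serre sequence (a rational point would) --- it is used only to make $\Br(K)\to\Br(X)$ injective via $\Br(K)\hookrightarrow\bigoplus_v\Br(K_v)$. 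The standard proofs finish either by computing $\Pic\overline{X}$ as a Galois module and comparing $H^1(K,\Pic\overline{B})$ with $H^1(K,\Pic\overline{X})$, or by restricting a class in $\Br(X)$ to the generic fibre, a conic over $K(B)$ for which $\Br K(B)\to\Br X_{K(B)}$ is surjective with kernel generated by the quaternion class $(a,s)$, and then using residues and purity, with the irreducibility of $\overline{Z}$ controlling the possible ramification. As it stands, your proposal names the obstacle but does not overcome it.
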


	\section{Ch\^atelet surfaces}
	
	Let $K$ be a number field. \defi{Ch\^atelet surfaces} are defined to be smooth projective models of conic bundle surfaces defined by the equation
	\begin{equation}\label{equation}
		y^2-az^2=P(x)
	\end{equation} in $K[x,y,z]$ such that $a\in K^\times,$ and that $P(x)$ is a separable degree-$4$ polynomial in $K[x].$ Given an equation (\ref{equation}), let $V^0$ be the affine surface in $\AA^3_K$ defined by this equation. Let $V$ be the natural smooth compactification  of $V^0$ given in \cite[Section 7.1]{Sk01}, which is called the Ch\^atelet surface given by this equation, cf. \cite[Section 5]{Po09}.
	
	\begin{remark}\label{remark birational to PP^2}
		For any local field $K_v,$  if $a\in K_v^{\times 2},$ then $V$ is birationally equivalent to $\PP^2$ over $K_v.$ By the implicit function theorem, there exists a $K_v$-point on $V.$
	\end{remark}
	
	\begin{remark}\label{remark the implicit function thm and local constant Brauer group}
		For any local field $K_v,$ by smoothness of $V,$ the implicit function theorem implies that the nonemptiness of $V^0(K_v)$ is equivalent to the nonemptiness of $V(K_v),$ and that $V^0(K_v)$ is open dense in $V(K_v)$ with the $v$-adic topology. Given an element $A\in\Br(V),$ the evaluation of $A$ on $V(K_v)$ is locally constant. By the properness of $V,$ the space $V(K_v)$ is compact. So the set of all possible values of the evaluation of $A$ on $V(K_v)$ is finite. Indeed, by \cite[Proposition 7.1.2]{Sk01}, there exist only two possible values. They are determined by the evaluation of $A$ on $V^0(K_v).$ In particular, if the evaluation of $A$ on $V^0(K_v)$ is constant, then it is constant on $V(K_v).$ 
	\end{remark}

	\begin{remark}\label{remark birational to Hasse--Minkowski theorem}
		If the polynomial $P(x)$ has a factor $x^2-a,$ i.e. there exists a degree-$2$ polynomial $f(x)$ such that $P(x)=f(x)(x^2-a),$ then $Y=\frac{xy+az}{x^2-a}$ and $Z=\frac{y+xz}{x^2-a}$ give a birational equivalence between $V$ and a quadratic surface given by $Y^2-aZ^2=f(x)$ with affine coordinates $(x,Y,Z).$ By the Hasse--Minkowski theorem, the surface $V$ satisfies weak approximation. 
	\end{remark}

%	\subsection{Ch\^atelet surfaces with rational points and not satisfying weak approximation}

	Given a number field $K,$ Liang \cite[Proposition 3.4]{Li18} constructed a Ch\^atelet surface over $K,$ which has a $K$-rational point and does not satisfy weak approximation off $\infty_K.$ From Liang's construction, there exists an element in the Brauer group of this surface, which has two different local invariants on a given finite place, i.e. this element gives an obstruction to weak approximation for this surface. In \cite[Proposition 4.1]{Wu22a}, the author generalized Liang's
	construction. Although it is enough for us, for the convenience of the reader, we use \v{C}ebotarev's density theorem, global class field theory and Dirichlet's theorem on arithmetic progressions to simplify it.

	We construct a Ch\^atelet surface explicitly having the following weak approximation property. It will be used in the proof of Theorem \ref{theorem main result: non-invariance of weak approximation with BMO}.

	\begin{proposition}(compare to \cite[Proposition 4.3]{Wu22a})\label{proposition the valuation of Brauer group on local points are fixed outside S and take two value on S}		
			For any extension of number fields $L/K,$ and any finite subset $T_0\subset \Omega_L,$
		there exists a Ch\^atelet surface $V_0$ defined over $K$ such that the set $V_0(K)\neq \emptyset,$ and that the surface $V_{0L}$ does not satisfy weak approximation off $T_0.$
	\end{proposition}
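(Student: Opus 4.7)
My plan is to imitate and adapt the construction of \cite[Proposition 4.3]{Wu22a}. I would exhibit $V_0$ explicitly as a Ch\^atelet surface $y^2 - p z^2 = P(x)$ and produce a class $\alpha \in \Br(V_{0L})$ whose local invariants generate a Brauer--Manin obstruction at a single place $w_0 \in \Omega_L \setminus T_0$. The engine of the construction is \v{C}ebotarev's density theorem, global class field theory, and Dirichlet's theorem on arithmetic progressions, used to simultaneously realize several local Hilbert-symbol conditions on a single prime element of $\Ocal_K$.

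\textbf{Choosing the auxiliary data.} Let $S_0 \subset \Omega_K$ be the (finite) set of places of $K$ lying below $T_0$. First I would fix a place $w_0 \in \Omega_L \setminus T_0$ lying above some $v_0 \in \Omega_K \setminus S_0$. Then, applying \v{C}ebotarev to the compositum of $L$ with a suitable ray class field of $K$ together with Dirichlet's theorem, I would select prime elements $p, q \in \Ocal_K$ with places outside $S_0 \cup 2_K \cup \infty_K$ satisfying: $\tau_v(p), \tau_v(q) > 0$ for every $v \in \infty_K^r$; $p \equiv q \equiv 1 \pmod{8\Ocal_K}$ (so Lemma \ref{lemma (p,-1)=1} applies and $p, q$ are local squares at every $2$-adic place); the place $v_p$ has $w_0$ as one of its extensions to $L$ with $p \notin L_{w_0}^{\times 2}$; and a coupling Hilbert-symbol condition (e.g.\ $(p,q)_{v_q} = -1$) that will produce the non-trivial local invariant at $w_0$ via global reciprocity. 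These conditions cut out a set of primes of positive Dirichlet density, so such $p, q$ exist. I would then take $a = p$ and choose a separable degree-$4$ polynomial $P(x) \in \Ocal_K[x]$, e.g.\ $P(x) = (x^2 - q)(q x^2 - p)$ up to a small perturbation, with a factorization $P = P_1 P_2$ arranged so that some explicit $x_0 \in K$ makes $P(x_0)$ a norm from $K(\sqrt{p})/K$; this provides the required $K$-rational point on $V_0$.

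\textbf{The Brauer class and its local invariants.} The quaternion class $\alpha = (p, P_1(x))$ is unramified on $V_0$ thanks to the factorization of $P$ on the conic bundle, so it lies in $\Br(V_0)$, and it remains nontrivial in $\Br(V_{0L})$ because $p$ stays a non-square at $w_0$. For each $w \in \Omega_L$ and each $P_w \in V_{0L}(L_w)$, I would compute $\inv_w(\alpha(P_w))$ via the Hilbert symbol $(p, P_1(x(P_w)))_w$, using the three lemmas of the excerpt: at finite odd places outside a controlled exceptional set, Lemmas \ref{lemma hilbert symbal lifting for odd prime} and \ref{lemma Hensel lemma for Hilbert symbal} make the invariant depend only on valuation parities, and the local-square hypothesis on $p$ kills those contributions; at archimedean and $2$-adic places, total positivity and the mod-$8$ congruence on $p$ together with Lemma \ref{lemma (p,-1)=1} make the invariant uniformly zero; finally, at $w_0$ itself, the two open subsets of $V_{0L}(L_{w_0})$ corresponding to even versus odd parity of $v_{w_0}(P_1(x))$ yield Hilbert symbols differing by $(p,p)_{w_0} = (p,-1)_{w_0}$, which the coupling Hilbert-symbol condition from the previous paragraph forces to equal $-1$. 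Remark \ref{remark the implicit function thm and local constant Brauer group} then propagates the values from the affine open subset to all of $V_{0L}(L_w)$.

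\textbf{Conclusion and main obstacle.} Because $\alpha$ takes two distinct values on $V_{0L}(L_{w_0})$ with $w_0 \notin T_0$ and is constant on $V_{0L}(L_w)$ for every other $w$, the Brauer--Manin pairing together with global reciprocity forces
\[
pr^{T_0}\!\bigl(V_{0L}(\AA_L)^{\Br}\bigr) \subsetneq pr^{T_0}\!\bigl(V_{0L}(\AA_L)\bigr),
\]
and since $V_{0L}(L) \subset pr^{T_0}(V_{0L}(\AA_L)^{\Br})$ weak approximation off $T_0$ fails, as required. The hardest part is the choice of primes in the first step: one must arrange positivity at every real place, congruences modulo $8\Ocal_K$, a prescribed non-square behaviour at a single place $w_0$ of $L$ which itself must be chosen outside the arbitrary finite set $T_0$, and the mutual reciprocity condition between $p$ and $q$, all simultaneously. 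This is precisely what motivates the joint use of \v{C}ebotarev, global class field theory, and Dirichlet's theorem signaled in the paragraph preceding the proposition.
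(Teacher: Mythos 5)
Your overall strategy is the paper's: use \v{C}ebotarev, class field theory and Dirichlet to pick prime elements, write down an explicit quaternion class on a conic bundle whose local invariant vanishes away from one distinguished place of $L$ not in $T_0$ and takes both values $0$ and $\half$ there, and conclude by reciprocity. The execution, however, has a genuine gap at the crucial step, namely the non-constancy of the invariant at $w_0$. You impose $\tau_v(p)>0$ for all $v\in\infty_K^r$ and $p\equiv 1\bmod 8\Ocal_K$, which by Lemma \ref{lemma (p,-1)=1} gives $(p,-1)_{v_p}=1$ and hence $(p,p)_{w_0}=(p,-1)_{w_0}=1$ for every $w_0\mid v_p$. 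So the quantity you identify as the jump of the invariant at $w_0$ is forced by your own hypotheses to be trivial, and your class $\alpha$ would have constant invariant there; the ``coupling condition'' $(p,q)_{v_q}=-1$ never enters the expression $(p,-1)_{w_0}$. The paper arranges the jump to be $(p_1,p_2)_{v_{p_1}}$ instead: with $P(x)=(p_2x^2+1)\bigl((1+p_2/p_1^2)x^2+1/p_1^2\bigr)$ and $A=(p_1,p_2x^2+1)$, the point $x=0$ gives invariant $0$ while any point with $v(x)<0$ gives $(p_1,p_2x^2+1)_v=(p_1,p_2)_v=-1$ by Lemma \ref{lemma Hensel lemma for Hilbert symbal}, and $(p_1,p_2)_{v_{p_1}}=-1$ is exactly what Dirichlet supplies; the congruence and positivity conditions on $p_1$ serve only to kill the archimedean and $2$-adic contributions and to deduce $(p_1,-p_2)_{v_{p_1}}=-1$, i.e.\ irreducibility of $p_2x^2+1$. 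You also omit the verification that both parity regions actually contain local points (in the paper: $(p_1,P(x_0))_v=(p_1,p_2)_v^2=1$, so $P(x_0)$ is a norm and a point with $x=x_0$ exists).

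Two further issues. Your claim that the invariant is constant at every $w\neq w_0$ is unjustified for the other places of $L$ above $v_p$: the same computation makes it non-constant at each of them, so the obstructed adelic open set must prescribe the value at all of those places simultaneously (the paper demands value $\half$ at one chosen place over $v_{p_1}$ and $0$ at the rest, so the total sum is $\half$); and you need every place of $T_0$ to avoid the fiber over $v_p$, which is a condition to be built into the \v{C}ebotarev step rather than obtained by fixing $w_0$ first, since the place of $K$ below a prescribed $w_0$ need not be generated by a prime element satisfying your congruence and positivity constraints. Finally, the paper requires $p_1$ to split completely in $L$ so that $L_{v'}=K_{v_{p_1}}$ and the whole local analysis (including irreducibility of $p_2x^2+1$ over $L$) transfers verbatim; with only ``$p\notin L_{w_0}^{\times 2}$'' the computation at $w_0$ would have to be redone in $L_{w_0}/K_{v_p}$, where for instance an even residue degree changes the answer.
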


			The rest of this section is devoted to the proof of Proposition \ref{proposition the valuation of Brauer group on local points are fixed outside S and take two value on S}.
		We construct the Ch\^atelet surface $V_0$ explicitly.
		
	\subsubsection{Choosing elements for the parameters in the equation (\ref{equation})}\label{subsection choose an element a for S}
	
	By \v{C}ebotarev's density theorem and global class field theory applied to a ray class
	field, we can find a prime element $p_1\in \Ocal_K$ such that  
	\begin{itemize}
		\item $\tau_v(p_1)>0$ for all $v\in \infty_K^r,$
		\item $p_1\equiv 1 \mod 8\Ocal_K,$
		\item $p_1$ splits completely in $L,$
		\item $v'\nmid v_{p_1}$ for all $v'\in T_0.$
	\end{itemize}
	We refer to \cite[Lemma 2.0.1]{Wu21} for more details.
	By the generalised Dirichlet's theorem on
	arithmetic progressions, we find another prime element $p_2\in \Ocal_K$ such that $(p_1,p_2)_{v_{p_1}}=-1.$ We refer to \cite[Proposition 2.1]{Li18} for more details.
	
	Let $V_0$ be the Ch\^atelet surface given by $y^2-p_1z^2=(p_2x^2+1)((1+p_2/p_1^2)x^2+1/p_1^2).$

	\begin{lemma}(compare to \cite[Proposition 4.1]{Wu22a})\label{lemma: the valuation of Brauer group on local points are fixed outside S and take two value on S}
		The Ch\^atelet surface $V_0$ defined over $K,$ has the following properties.
		\begin{enumerate}
			\item The subset $V_0(K)\subset V_0(L)$ is nonempty. The natural maps $\Br(K)\to \Br(V_0)$ and $\Br(L)\to \Br(V_{0L})$	are injective. The Brauer group $\Br(V_0)/\Br(K)\cong\Br(V_{0L})/\Br(L)\cong \ZZ/2\ZZ,$ is generated by an element $A\in \Br(V_0).$ 
			\item For the place $v=v_{p_1},$ there exist $P_v$ and $Q_v$ in $V_0(K_v)$ such that the local invariants $\inv_v(A(P_v))=0$ and $\inv_v(A(Q_v))=\half.$  For any other $v\neq v_{p_1},$ and any $P_v\in V_0(K_v),$ the local invariant $\invap=0.$	
			\item For any $v'|v_{p_1},$ there exist $P_{v'}$ and $Q_{v'}$ in $V_0(L_{v'})$ such that the local invariants $\inv_{v'}(A(P_{v'}))=0$ and $\inv_{v'}(A(Q_{v'}))=\half.$  For any other $v'\nmid v_{p_1},$ and any $P_{v'}\in V_0(L_{v'}),$ the local invariant $\inv_{v'}(A(P_{v'})) =0.$
		\end{enumerate}
		\end{lemma}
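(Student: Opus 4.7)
The plan is to take the quaternion class $A = (p_1, p_2x^2+1) \in \Br(K(V_0))$, show it extends to a class in $\Br(V_0)$ with the claimed Brauer group structure, and evaluate its local invariants place by place. For part (1), the point $(0, 1/p_1, 0) \in V_0^0(K) \subset V_0^0(L)$ gives non-emptiness and (via the evaluation splitting of $\Br(V_0) \to \Br(K)$) injectivity of $\Br(K) \hookrightarrow \Br(V_0)$ and $\Br(L) \hookrightarrow \Br(V_{0L})$. The identity $(p_2x^2+1)((1+p_2/p_1^2)x^2+1/p_1^2) = y^2 - p_1z^2$ on $V_0$ gives $(p_1, p_2x^2+1) = (p_1, (1+p_2/p_1^2)x^2+1/p_1^2)$ in $\Br(K(V_0))$, which shows that $A$ is unramified along both bad-fiber divisors and defines an element of $\Br(V_0)$. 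The standard Brauer group computation for Ch\^atelet surfaces \cite[Ch.~7]{Sk01} then yields $\Br(V_0)/\Br(K) \cong \ZZ/2\ZZ$ generated by $A$, provided $p_1$ is a non-square and both quadratic factors of $P$ are irreducible. The former is immediate since $p_1$ is a uniformizer at $v_{p_1}$, and the latter holds because $-1/p_2$ and $-1/(p_1^2+p_2)$ are non-squares in $K_{v_{p_1}}$ (by Lemma~\ref{lemma (p,-1)=1} together with $(p_1, p_2)_{v_{p_1}} = -1$). The identical computation works over $L$: the complete splitting of $p_1$ in $L$ gives $L_{v'} = K_{v_{p_1}}$ at any $v' \mid v_{p_1}$, so the same non-square witnesses carry over.

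For parts (2) and (3), I evaluate the invariant place by place. At every archimedean place $\tau_v(p_1) > 0$ at real places and the complex case is trivial, so $p_1$ is a square in the completion and the invariant vanishes. At every dyadic place, $p_1 \equiv 1 \pmod{8\Ocal_K}$ forces $p_1$ to be a square in the completion, so the invariant again vanishes. At an odd place $w$ not lying over $v_{p_1}$: if $p_1$ is a square at $w$ the invariant is trivially $0$; otherwise $K_w(\sqrt{p_1})/K_w$ (or $L_w(\sqrt{p_1})/L_w$) is the unramified quadratic extension, so $(p_1, u\pi^k)_w = (-1)^k$ on units $u$. A short case analysis on $w(x_0)$ --- using the identity $p_1^2f_2(x) = p_1^2x^2 + f_1(x)$ (with $f_1 = p_2x^2+1$, $f_2 = (1+p_2/p_1^2)x^2+1/p_1^2$) to treat the subtle case $w(x_0) = 0$ with $p_2x_0^2 \equiv -1 \pmod w$, and using local solvability (which forces $w(P(x_0))$ to be even) to dispose of the $w \mid v_{p_2}$ subcases --- shows that $w(f_1(x_0))$ is always even at a local solution, hence $\inv_w = 0$.

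At the critical place $v = v_{p_1}$, the point $P_v = (0, 1/p_1, 0)$ gives $(p_1, 1)_v = 1$ and $\inv_v(A(P_v)) = 0$. For $Q_v$, I take $x_0 = 1/p_1$: since $(p_1^2+p_2)(2p_1^2+p_2) \equiv p_2^2 \pmod{p_1^2}$, the value $P(1/p_1) = (p_1^2+p_2)(2p_1^2+p_2)/p_1^6$ is a square in $K_{v_{p_1}}$ (by Hensel applied to the class of $(p_1^2+p_2)(2p_1^2+p_2)/p_2^2$, which is $\equiv 1 \pmod{p_1}$), so $Q_v = (1/p_1, \sqrt{P(1/p_1)}, 0) \in V_0(K_{v_{p_1}})$; and the Hilbert symbol $(p_1, (p_1^2+p_2)/p_1^2)_v = (p_1, p_2)_v = -1$ yields $\inv_v(A(Q_v)) = \half$. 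The same construction works verbatim at every $v' \mid v_{p_1}$ in $L$ by complete splitting, proving part (3) at those places.

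The main obstacle is the odd-place case analysis where $p_1$ is a non-square: one must rule out uniformly (over all such $w$ in $\Omega_K$ and $\Omega_L$) the possibility that $w(f_1(x_0))$ is odd at some local solution. The identity $p_1^2f_2 = p_1^2x^2 + f_1$ is the critical tool: whenever $w(f_1(x_0))$ jumps above $0$, the identity forces $w(f_2(x_0)) = 0$, so local solvability (which demands even parity of $w(f_1) + w(f_2)$) forces $w(f_1(x_0))$ to be even.
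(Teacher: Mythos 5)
Your proposal is correct and follows essentially the same route as the paper's proof: the same generator $A=(p_1,p_2x^2+1)$, the same base point $(0,1/p_1,0)$ giving the retractions, the same place-by-place analysis (archimedean/dyadic where $p_1$ is a local square, odd places away from $v_{p_1}$, then $v_{p_1}$ itself), and the same reduction of part (3) to part (2) via the complete splitting $L_{v'}=K_{v_{p_1}}$. The only deviations are cosmetic: at odd places away from $v_{p_1}$ you argue via the additive identity $p_1^2f_2=p_1^2x^2+f_1$ and the even parity of $w(P(x_0))$ forced by local solvability, where the paper instead writes $f_2(x)=x^2\bigl(1+(p_2+x^{-2})/p_1^2\bigr)$ and applies Hensel's lemma to get a contradiction, and at $v_{p_1}$ you exhibit the explicit point with $x_0=1/p_1$ and $z=0$ where the paper takes an arbitrary $x_0$ of negative valuation and deduces solvability from $(p_1,P(x_0))_v=1$.
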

			
	\begin{proof}
		\begin{enumerate}
			\item 		For $(x,y,z)=(0,1/p_1,0)$ is a rational point on $V_0^0,$ the set $V_0(K)$ is nonempty. We denote this rational point by $Q_0.$ And $Q_0$ gives sections of maps $\Br(K)\to \Br(V_0)$ and $\Br(L)\to \Br(V_{0L}),$ so they are injective.
						
			By the choice of $p_1$ and Lemma \ref{lemma (p,-1)=1}, we have $(p_1,-1)_{v_{p_1}}=1.$ By the choice of $p_2,$ 
			 we have $(p_1,p_2)_{v_{p_1}}=-1.$ So			 
			 $(p_1,-p_2)_{v_{p_1}}=(p_1,-p_2)_{v_{p_1}}(p_1,-1)_{v_{p_1}}=-1.$ Hence $-p_2\notin K_{p_1}^{\times 2}.$ The chosen condition that $p_1$ splits  in $L$ implies  $-p_2\notin L^{\times 2}.$ So the polynomial $p_2x^2+1$  is irreducible over $K$ and $L.$ 
			According to \cite[Proposition 7.1.1]{Sk01}, the Brauer group $\Br(V_0)/\Br(K)\cong\Br(V_{0L})/\Br(L)\cong \ZZ/2\ZZ.$ Furthermore, by Proposition 7.1.2 in loc. cit, we take the quaternion algebra $A=(p_1,p_2x^2+1)\in \Br(V_0)$ as a generator element of this group. Then we have the equality $A=(p_1,p_2x^2+1)=(p_1,(1+p_2/p_1^2)x^2+1/p_1^2)$ in $\Br(V_0).$

			\item 	
			For any $v\in \Omega_K,$ since $(p_1,1)_v=1,$ the local invariant $\invaqo=0.$ By Remark \ref{remark the implicit function thm and local constant Brauer group}, it suffices to compute the local invariant $\invap$ for all $P_v\in V_0^0(K_v).$
			\begin{enumerate}
				\item		Suppose that $v\in \infty_K^r\cup 2_K.$  Then $p_1\in K_v^{\times 2},$ so $\invap=0$ for all $P_v\in V_0(K_v).$
					\item Suppose that $v\in \Omega_K^f\backslash (v_{p_1}\cup 2_K ).$ Take an arbitrary $P_v\in V_0^0(K_v).$ If $\invap=1/2,$ then $(p_1,p_2x^2+1)_v=-1=(p_1,(1+p_2/p_1^2)x^2+1/p_1^2)_v$ at $P_v.$ For $v(p_1)=0,$  by Lemma \ref{lemma hilbert symbal lifting for odd prime}, the first equality implies that $v(p_2x^2+1)$ is odd. So $v(x)\leq 0.$ Hence $v(p_2+x^{-2})$ is odd and positive. For $v(p_1)=0,$ by Hensel's lemma, we have $1+(p_2+x^{-2})/p_1^2\in K_v^{\times 2}.$ So $(p_1,(1+p_2/p_1^2)x^2+1/p_1^2)_v=(p_1,x^2)_v(p_1,1+(p_2+x^{-2})/p_1^2)_v=1,$ which is a contradiction. So $\invap=0.$
					\item Suppose that $v=v_{p_1}.$ Take $P_v=Q_0,$ then $\invap=0.$ Take $x_0\in K_v$ such that  $v(x_0)< 0.$ For $v(p_2)=0,$ by Lemma \ref{lemma Hensel lemma for Hilbert symbal}, we have $(p_1,p_2x_0^2+1)_v=(p_1,p_2x_0^2)_v=(p_1,p_2)_v$ and $(p_1,(1+p_2/p_1^2)x_0^2+1/p_1^2)_v=(p_1,p_2x_0^2/p_1^2)_v=(p_1,p_2)_v.$ So
				$(p_1,(p_2x_0^2+1)((1+p_2/p_1^2)x_0^2+1/p_1^2))_v=(p_1,p_2)_v(p_1,p_2)_v=1.$ Hence, there exists a $Q_v\in V_0^0(K_v)$ with $x=x_0.$ For $(p_1,p_2)_v=-1,$ we have $\invaq=\half.$ 	
			\end{enumerate}

			\item 		For any $v'\in \Omega_L,$ the local invariant $\inv_{v'}(A(Q_0))=0.$\\
			Suppose that $v' |v_{p_1}.$  By the assumption that $v_{p_1}$ splits completely in $L,$ we have $K_{v_{p_1}}=L_{v'}.$ So $V_0(K_{v_{p_1}})=V_0(L_{v'}).$	By the argument already shown, there exist $P_v, Q_v\in V_0(K_v)$ such that $\inv_v(A(P_v))=0$ and $\inv_v(A(Q_v))=\half.$ View $P_v, Q_v$ as elements in $V_0(L_{v'}),$ and let $P_{v'}=P_v$ and $Q_{v'}=Q_v.$ Then $\inv_{v'}(A(P_{v'}))=\invap=0$ and $\inv_{v'}(A(Q_{v'}))=\invaq=\half.$ \\
			Suppose that $v'\nmid v_{p_1}.$ This local computation is the same as the case $v\in \Omega_K\backslash \{v_{p_1}\}.$	
		\end{enumerate}

	\end{proof}

	With the help of Lemma \ref{lemma: the valuation of Brauer group on local points are fixed outside S and take two value on S}, we now prove that the Ch\^atelet surface $V_0$ has the property of Proposition \ref{proposition the valuation of Brauer group on local points are fixed outside S and take two value on S}.

	\begin{proof} 
		 Let $T'\subset \Omega_L$ be the subset of all places above $v_{p_1}.$ We take a place $v_0'\in T'.$ Since $p_1$ splits completely in $L,$ we have $L_{v_0'} =K_{v_{p_1}}.$ 
		Let $U_{v_0'}=\{P_{v_0'}\in V_0(L_{v_0'})| \inv_{v_0'}(A(P_{v_0'}))=\half\}.$ For $v'\in T'\backslash \{v_0'\},$  let $U_{v'}=\{P_{v'}\in V_0(L_{v'})| \inv_{v'}(A(P_{v'}))=0\}.$ For any $v'\in T',$ by Lemma \ref{lemma: the valuation of Brauer group on local points are fixed outside S and take two value on S}, the set $U_{v'}$ is a nonempty  open subset of $V_0(L_{v'}).$ Let $M=\prod_{v'\in T'}U_{v'}\times \prod_{v'\notin T' }V_0(L_{v'}).$ It is a nonempty  open subset of $V_0(\AA_L).$ For any $(P_{v'})_{v'\in \Omega_L}\in M,$ by Lemma \ref{lemma: the valuation of Brauer group on local points are fixed outside S and take two value on S} and the choice of $U_{v'},$ the sum $\sum_{v'\in \Omega_L}\inv_{v'}(A(P_{v'}))=\half$ is nonzero in $\QQ/\ZZ.$ So $V_{0L}(\AA_L)^{\Br}\cap M=\emptyset,$ which implies  $V_0(L)\cap M= \emptyset.$ By the choice of $p_1,$ we have $v'\nmid v_{p_1}$ for all $v'\in T_0.$ Then $T_0\cap T'=\emptyset.$ Hence $V_{0L}$ does not satisfy weak approximation off $T_0.$
	\end{proof}

	Using the constructional method, we have the following example, which is a special case of Proposition \ref{proposition the valuation of Brauer group on local points are fixed outside S and take two value on S}. It will be used for further discussion.
	
	\begin{eg}\label{example1: construction of V_0}
		For $K=\QQ$ and $L=\QQ(\sqrt{3}),$ and  let $T_0\subset \Omega_L\backslash \{73$-adic places$\}$ be a finite subset. 
		We choose prime elements: $p_1=73$ splitting completely in $L$ and $p_2=5.$ Then the Ch\^atelet surface given by $y^2-73z^2=(5x^2+1)(5334x^2/5329+1/5329),$ has the properties of  Propositions \ref{proposition the valuation of Brauer group on local points are fixed outside S and take two value on S}.
	\end{eg}
	%\begin{remark}
	%	Let $S_1$ and $S_2$ be two finite sets in $\Omega_K$ and both $S_1$ and $S_2$ satisfy the conditions of Proposition \ref{corollary the valuation of Brauer group on local points are fixed and on S_L nontrivial}, then there exist $a\in K\backslash L^2$ and relatively prime separable degree-4 polynomials $P_1$ and $P_2$ such that the Ch\^atelet surfaces given by $y^2-az^2=P_1$ and $y^2-az^2=P_1$ $W_1$ and $W_2$ 
	%\end{remark}

	\section{Stoll's conjecture for curves}
	For a smooth, projective, and geometrically connected curve defined over a number field, Stoll \cite[Conjecture 9.1]{St07} made the following conjecture. 
	
	Given a curve $C$ defined over a number field $K,$ let $C(\AA_K)_\bullet=\prod_{v\in \infty_K}\{$connected components of $C(K_v)\}\times C(\AA_K^{\infty_K}).$ The product topology of $\prod_{v\in \infty_K}\{$connected components of $C(K_v)\}$
	with discrete topology and $C(\AA_K^{\infty_K})$ with adelic topology, gives a topology for $C(\AA_K)_\bullet.$  For any $A\in\Br(C),$ and any $v\in\infty_K,$ the evaluation of $A$ on each connected component of $C(K_v)$ is constant. So, the notation $C(\AA_K)_\bullet^{\Br}$ makes sense.

	\begin{conjecture}\cite[Conjecture 9.1]{St07}\label{conjecture Stoll}
		For any smooth, projective, and geometrically connected curve $C$ defined over a number field $K,$  the set $C(K)$ is dense in $C(\AA_K)_\bullet^{\Br}.$ In particular, the curve $C$ satisfies weak approximation with Brauer--Manin obstruction off $\infty_K.$
	\end{conjecture}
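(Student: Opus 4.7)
The statement is Stoll's celebrated open conjecture, so no complete proof is available; I outline the line of attack I would pursue and flag the obstruction. The plan is to transport the problem to the Jacobian $J=\mathrm{Jac}(C)$. First, using a $K$-rational divisor class of degree one (automatic if $C(K)\neq\emptyset$, and a separate preliminary step otherwise), embed $j\colon C\hookrightarrow J$ over $K$. Functoriality of the \BM pairing then gives $j(C(\AA_K)_\bullet^\Br)\subset J(\AA_K^{\infty_K})^\Br$ after projecting away from archimedean places, since the evaluation of any Brauer class is constant on each connected component of $C(K_v)$ at a real place. Assuming finiteness of $\Sha(J)$, Skorobogatov's Proposition 6.2.4 (cited above) yields that $J(K)$ is dense in $J(\AA_K^{\infty_K})^\Br$. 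Consequently, every $(P_v)\in C(\AA_K)_\bullet^\Br$ lands in the intersection $j(C(\AA_K^{\infty_K}))\cap\overline{J(K)}$ inside $J(\AA_K^{\infty_K})$.

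It then remains to show that this intersection equals $j(\overline{C(K)})$. This is the crucial step, and it is precisely the Scharaschkin / Mordell--Weil sieve conjecture. The standard tool here is Chabauty--Coleman: when $\mathrm{rank}\,J(K)<g(C)$, for a prime $v$ of good reduction, $p$-adic integration along regular differentials forces $\overline{J(K)}\cap C(K_v)$ to be a finite $v$-adic analytic set, so a single $v$-adic condition already separates $K$-points of $C$ from the other local points. Outside this low-rank regime one tries to combine information at many primes via the explicit Mordell--Weil sieve, refined by explicit descent in the sense of Bruin--Stoll.

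The main obstacle is precisely the regime $\mathrm{rank}\,J(K)\geq g(C)$: the $p$-adic closure of $J(K)$ in $J(K_v)$ need not cut out a finite set on $C(K_v)$, and separating $K$-points of $C$ from the remaining elements of $\overline{J(K)}\cap j(C(\AA_K^{\infty_K}))$ appears to require controlling the arithmetic at infinitely many primes simultaneously, with no uniform mechanism presently known. Partial cases that the above plan handles: genus zero is Hasse--Minkowski; genus one with $C(K)\neq\emptyset$ reduces directly to Skorobogatov's abelian-variety result; and for higher genus with low Mordell--Weil rank, the Chabauty--Coleman argument settles the conjecture conditionally on $\Sha$-finiteness. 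The general case seems to demand a genuinely new input, which is why the conjecture is assumed rather than proved in the present paper.
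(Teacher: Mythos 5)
This statement is Stoll's Conjecture 9.1, which the paper cites and \emph{assumes} as a hypothesis; the paper offers no proof of it, and none is known. You correctly recognize this, and your survey of the accessible cases (genus zero via Hasse--Minkowski, elliptic curves with finite $\Sha$ via the Cassels--Tate dual sequence, low Mordell--Weil rank via Chabauty--Coleman and the Mordell--Weil sieve) is accurate and consistent with the paper's own remark following the conjecture, so there is nothing to compare against and nothing further to fix.
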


	\begin{remark}
		For an elliptic curve defined over $K,$ if its Tate--Shafarevich group is finite, then by the dual sequence of Cassels--Tate, Conjecture \ref{conjecture Stoll} holds for this elliptic curve, cf. \cite[Chapter 6.2]{Sk01}.
		With the effort of Kolyvagin \cite{Ko90,Ko91}, Gross and Zagier \cite{GZ86}, and many others, for an elliptic curve $E$ over $\QQ,$ if its analytic rank equals zero or one, then its Mordell-Weil rank equals its analytic rank, and its Tate--Shafarevich  group $\Sha(E,\QQ)$ is finite. So Conjecture \ref{conjecture Stoll} holds for $E.$
		%Original statement for weak approximation with Brauer--Manin obstruction is that $C(K)$ is dense in $C(\AA_K)_\bullet,$ here $C(\AA_K)_\bullet$ is obtained from $C(\AA_K)$ in the way of replacing archimedean places part by product of their sets of connected components with discrete topology. 
	\end{remark}

	%In this paper, we make the following assumptions for the given number field extension $L$ of $K.$
	
	\begin{definition}\label{definition curve of type}
		Given a nontrivial extension of number fields $L/K,$ let $C$ be a smooth, projective, and geometrically connected curve defined over $K.$ We say that a triple $(C,K,L)$  is of \defi{type $I$} if $C(K)$ and $C(L)$ are both finite nonempty sets, $C(K)\neq C(L)$ and Stoll's Conjecture \ref{conjecture Stoll} holds for the curve $C.$ 
	\end{definition}

	\begin{lemma}\label{lemma stoll conjecture}
		Given a nontrivial extension of number fields $L/K,$ if Conjecture \ref{conjecture Stoll} holds for all smooth, projective, and geometrically connected curves defined over $K,$ then there exists a curve $C$ defined over $K$ such that the triple $(C,K,L)$ is of type $I.$
	\end{lemma}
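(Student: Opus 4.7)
The plan is to produce an explicit hyperelliptic curve over $K$ of genus at least $2$ that already has a $K$-rational point and acquires a strictly larger set of rational points after base change to $L$. Faltings' theorem will then give finiteness of $C(K)$ and $C(L)$, while the hypothesis of the lemma supplies Conjecture~\ref{conjecture Stoll} for $C$ for free, so the only substantive work lies in the construction.

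More concretely, I will first pick $\alpha \in L \setminus K$ (available because $L/K$ is nontrivial) and let $P(x) \in K[x]$ be its minimal polynomial, of degree $n = [K(\alpha):K] \geq 2$. I will then set $f(x) = P(x)R(x)$, where $R(x) = \prod_{i=0}^{m}(x-r_i) \in K[x]$ is a product of distinct linear factors with $r_i \in K$ none of which is a root of $P$; this is possible since $K$ is infinite and $P$ has only finitely many roots in $\overline{K}$. The integer $m$ will be chosen so that $\deg f = n + m + 1$ is odd and at least $5$, making $f$ separable and the smooth projective model $C$ of $y^2 = f(x)$ a smooth, projective, geometrically connected curve of genus $\lfloor (\deg f - 1)/2 \rfloor \geq 2$.

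To verify that $(C, K, L)$ is of type $I$ in the sense of Definition~\ref{definition curve of type}, Faltings' theorem gives at once that $C(K)$ and $C(L)$ are finite; the Weierstrass point $(r_0, 0)$ lies in $C(K)$, so the latter is nonempty; the Weierstrass point $(\alpha, 0)$ lies in $C(L) \setminus C(K)$ because its $x$-coordinate is in $L \setminus K$, so $C(K) \subsetneq C(L)$; and the hypothesis of the lemma guarantees Conjecture~\ref{conjecture Stoll} for $C$.

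No step is really an obstacle; the only small care required is to ensure that $f$ is separable of odd degree at least $5$, and the choices above do this automatically, so the lemma follows at once.
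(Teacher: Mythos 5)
Your proof is correct and follows essentially the same strategy as the paper's: both take the minimal polynomial of an element of $L\setminus K$, multiply it by an auxiliary separable polynomial that splits over $K$, pass to a cyclic cover of $\PP^1$ ramified over the resulting zero locus so that the genus is at least $2$, exhibit one ramification point over $K$ and one over $L\setminus K$, and conclude with Faltings' theorem plus the hypothesis that Conjecture~\ref{conjecture Stoll} holds over $K$. The only difference is the model --- you use a hyperelliptic double cover $y^2=P(x)R(x)$ whereas the paper uses the plane superelliptic curve $w_2^{n+2}=\tilde{f}(w_0,w_1)(w_1^2-w_0^2)$ --- and your version has the minor advantage of working uniformly in $n=[K(\alpha):K]$, without the parity restriction the paper's proof imposes on $\deg f$.
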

	
	\begin{proof}
		Since $L$ is a finite separable extension over $K,$ there exists a $\theta\in L$ such that $L=K(\theta).$ Let $f(x)$ be the monic minimal polynomial of $\theta.$ Let $n=\deg(f),$ then $n=[L:K]\geq 2.$ Let $\tilde{f}(w_0,w_1)$ be the homogenization of $f.$ If $n$ is odd, we consider a curve $C$ defined over $K$ by a homogeneous equation: $w_2^{n+2}=\tilde{f}(w_0,w_1)(w_1^2-w_0^2)$ with homogeneous coordinates $(w_0:w_1:w_2)\in \PP^2.$ For the polynomials $f(x)$ and $x^2-1$ are separable and coprime in $K[x],$ the curve $C$ is smooth, projective, and geometrically connected. By genus formula for a plane curve, the genus of  $C$ equals $g(C)=n(n+1)/2>1.$ By Faltings's theorem \cite[Satz 7]{Fa83}, the sets $C(K)$ and $C(L)$ are both finite. It is easy to check that $(w_0:w_1:w_2)=(1:1:0)\in C(K)$ and $(\theta:1:0)\in C(L)\backslash C(K).$ By the assumption that Conjecture \ref{conjecture Stoll} holds for all smooth, projective, and geometrically connected curves over $K,$ we have that the triple $(C,K,L)$ is of type $I.$ 
	\end{proof}
	
	\begin{remark}
		%	Let $L=K(\theta),$ and let $f(x)$ be the minimal polynomial of $\theta$ and $f(w_0,w_1)$ be its homogenization. Consider a curve $C$ defined over $K$ by a homogeneous equation: $w_2^{n+2}=f(w_0,w_1)(w_1^2-w_0^2)$ with homogeneous coordinates $(w_0:w_1:w_2)\in \PP^2.$ For the polynomials $f(x)$ and $x^2-1$ are separable and coprime in $K[x],$ this defines a smooth, projective, and geometrically connected curve. By genus formula for plane curve, the genus of  $C$ equals $g(C)=\frac{n(n+1)}{2}\geq 3,$ then by Faltings's theorem, the sets $C(K)$ and $C(L)$ are both finite. For $(w_0:w_1:w_2)=(1:1:0)\in C(K)$ and $(1:\theta:0)\in C(L)\backslash C(K),$ if Stoll's Conjecture \ref{conjecture Stoll} holds for all smooth, projective, and geometrically connected curves over $K,$ then this curve $C$ satisfies our Stoll's conjecture. Furthermore, if $n$ is odd, then $C_{K_{v'}}$ is connected for all $v'\in \infty_L.$ If $n$ is even, then consider a curve $C'$ defined by a homogeneous equation: $w_2^{n+3}=f(w_0,w_1)w_1(w_1^2-w_0^2)$ with homogeneous coordinates $(w_0:w_1:w_2)\in \PP^2.$ Then $C'(L_{v'})$ is connected for all	$v'\in \infty_L.$ In other words,  if $L$ has a real place and Stoll's Conjecture \ref{conjecture Stoll} holds for all smooth, projective, and geometrically connected curves over $K,$ then there exists a curve satisfying our Stoll's conjecture. 	
		For some nonsquare integer $d,$ let $K=\QQ$ and $L=\QQ(\sqrt{d}).$  Consider an elliptic curve $E_d$ defined by a Weierstra\ss~ equation: $y^2=x^3+d.$ 
		Let $E_d^{(d)}$ be the quadratic twist of $E_d$ by $d.$ It is easy to check that the point $(x,y)=(0,\sqrt{d})\in C(L)\backslash C(K).$
		% with Weierstra\ss~ equation: $y^2=x^3+d^4.$ 
		If both $E_d(\QQ)$ and $E_d^{(d)}(\QQ)$ are finite, then the set $E_d(L)$ is finite, cf. \cite[Exercise 10.16]{Si09}. If additionally, the Tate--Shafarevich group $\Sha(E_d,\QQ)$ is finite, then the triple $(E_d,K,L)$ is of type $I.$  
	\end{remark}

	\section{Main results for Ch\^atelet surface bundles over curves}

	\subsection{Preparation Lemmas} We state the following lemmas, which will be used for the proof of our theorems. 
	
	Fibration methods are used in \cite[Proposition 3.1]{CTX13}, \cite[Lemma 5.1]{CX18} and \cite[Section 4]{CLX18}. We modify those fibration methods to fit into our context. 
	%The following two fibration Lemmas  will be used for the proof of our main theorems. In this subsection, we give two fibration lemmas. 
	\begin{lemma}\label{lemma fiber criterion for wabm}
		Let $K$ be a number field,  and let $S\subset \Omega_K$ be a finite subset.  Let $f\colon X\to Y$ be a $K$-morphism of proper $K$-varieties $X$ and $Y.$ We assume that
		\begin{enumerate}{
				\item\label{fiber criterion for wabm condition 1}  the set $Y(K)$ is finite,
				\item\label{fiber criterion for wabm condition 2}  the variety $Y$ satisfies weak approximation with Brauer--Manin obstruction off $S,$
				\item\label{fiber criterion for wabm condition 3}  for any $P\in Y(K),$ the fiber $X_P$ of $f$ over $P$ satisfies weak approximation off $S.$}
		\end{enumerate}
		Then $X$ satisfies weak approximation with Brauer--Manin obstruction off $S.$
	\end{lemma}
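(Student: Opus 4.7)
The plan is to take an arbitrary adelic point $x = \{Q_v\}_{v \in \Omega_K} \in X(\AA_K)^{\Br}$ and an arbitrary open neighborhood $W \subset X(\AA_K^S)$ of $pr^S(x),$ and to exhibit a $K$-rational point of $X$ whose $S$-adelic projection lies in $W.$ The first move is to push $x$ forward by $f.$ Set $y := \{f(Q_v)\}_{v \in \Omega_K} \in Y(\AA_K);$ since $(f^{*}B)(Q_v) = B(f(Q_v))$ for every $B \in \Br(Y),$ functoriality of the Brauer--Manin pairing gives
$$ \sum_{v \in \Omega_K} \inv_v\bigl(B(f(Q_v))\bigr) = \sum_{v \in \Omega_K} \inv_v\bigl((f^{*}B)(Q_v)\bigr) = 0, $$
so that $y \in Y(\AA_K)^{\Br}.$

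Next I use assumptions (\ref{fiber criterion for wabm condition 1}) and (\ref{fiber criterion for wabm condition 2}) together in order to localize the problem to a single fiber. The density statement for $Y$ says that $Y(K)$ is dense in $pr^S(Y(\AA_K)^{\Br});$ but $Y(\AA_K^S)$ is Hausdorff and $Y(K)$ is finite, hence closed, so this density upgrades to an honest containment
$$ pr^S(Y(\AA_K)^{\Br}) \subseteq \overline{Y(K)} = Y(K). $$
Applied to $y$ this produces a point $P \in Y(K)$ such that $f(Q_v) = P$ in $Y(K_v)$ for every $v \in \Omega_K \setminus S;$ in particular $Q_v \in X_P(K_v)$ for every such $v.$

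To finish, I transfer the approximation problem into the fiber $X_P.$ As a closed subscheme of the proper $K$-variety $X,$ the fiber $X_P$ is itself proper, so $X_P(\AA_K^S) = \prod_{v \notin S} X_P(K_v)$ and $pr^S(x) \in X_P(\AA_K^S).$ The closed immersion $X_P \hookrightarrow X$ induces a continuous embedding $X_P(\AA_K^S) \hookrightarrow X(\AA_K^S),$ so $W \cap X_P(\AA_K^S)$ is an open neighborhood of $pr^S(x)$ in $X_P(\AA_K^S).$ By assumption (\ref{fiber criterion for wabm condition 3}), $X_P(K)$ is dense in $X_P(\AA_K^S),$ whence some $x_0 \in X_P(K) \subseteq X(K)$ satisfies $pr^S(x_0) \in W,$ as required. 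The only nonformal ingredient is the middle step: finiteness of $Y(K)$ is precisely what promotes the Brauer--Manin density hypothesis on $Y$ into an actual containment, and thereby collapses the fibration question on $X$ to a single-fiber weak approximation question; once that is done the argument is purely topological bookkeeping.
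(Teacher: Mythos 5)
Your proof is correct and follows essentially the same route as the paper's: push the adelic Brauer--Manin point forward along $f$, use functoriality of the pairing together with finiteness of $Y(K)$ and hypothesis (2) to see that off $S$ it collapses onto a single rational point $P\in Y(K)$, and then apply weak approximation on the fiber $X_P$. The only difference is presentational --- you argue with a point and a neighborhood where the paper argues with basic open sets, making explicit the (correct) observation that finiteness of $Y(K)$ upgrades the density hypothesis to the containment $pr^S(Y(\AA_K)^{\Br})\subseteq Y(K)$.
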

	
	\begin{proof}
		For any finite subset $S'\subset\Omega_K\backslash S,$ take an open subset $N=\prod_{v\in S'}U_v\times \prod_{v\notin S'}X(K_v)\subset X(\AA_K)$ such that $N\bigcap X(\AA_K)^{\rm Br}\neq \emptyset.$ Let $M=\prod_{v\in S'}f(U_v)\times \prod_{v\notin S'}f(X(K_v)),$ then by the functoriality of Brauer--Manin pairing, $M\bigcap Y(\AA_K)^{\rm Br}\neq \emptyset.$ By Assumptions (\ref{fiber criterion for wabm condition 1}) and (\ref{fiber criterion for wabm condition 2}), we have $Y(K)= pr^S(Y(\AA_K)^{\rm Br}).$ So there exists $P_0\in pr^S(M)\bigcap Y(K).$ Consider the fiber $X_{P_0}.$ Let $L=\prod_{v\in S'} [X_{P_0}(K_v)\bigcap U_v]\times \prod_{v\notin S'\cup S} X_{P_0}(K_v),$ then it is a nonempty open subset of $X_{P_0}(\AA_K^S).$  By Assumption (\ref{fiber criterion for wabm condition 3}), there exists $Q_0\in L\bigcap X_{P_0}(K).$ So $Q_0\in X(K)\bigcap N,$ which implies that $X$ satisfies weak approximation with Brauer--Manin obstruction off $S.$
	\end{proof}

	\begin{lemma}\label{lemma fiber criterion for not wabm}
		Let $K$ be a number field,  and let $S\subset \Omega_K$ be a finite subset.  Let $f\colon X\to Y$ be a $K$-morphism of proper $K$-varieties $X$ and $Y.$ We assume that
		\begin{enumerate}{
				\item\label{fiber criterion for not wabm condition 1} the set $Y(K)$ is finite,
				\item\label{fiber criterion for not wabm condition 2} the morphism $f^*\colon \Br(Y)\to \Br(X)$ is surjective,
				\item\label{fiber criterion for not wabm condition 3} there exists some $P\in Y(K)$ such that the fiber $X_P$ of $f$ over $P$ does not satisfy weak approximation off $S,$ and that $\prod_{v\in S}X_P(K_v)\neq \emptyset.$  }
		\end{enumerate}
		Then $X$ does not satisfy weak approximation with Brauer--Manin obstruction off $S.$
	\end{lemma}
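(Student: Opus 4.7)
The plan is to produce an open neighborhood $N \subset X(\AA_K^S)$ that meets $pr^S(X(\AA_K)^{\Br})$ but avoids $X(K)$, which directly contradicts density. By hypothesis (3), there exist a finite subset $S' \subset \Omega_K \setminus S$ and nonempty opens $U_v \subset X_P(K_v)$ for $v \in S'$ such that $W := \prod_{v \in S'} U_v \times \prod_{v \notin S' \cup S} X_P(K_v)$ is nonempty but disjoint from $X_P(K)$. Since $f$ is proper, the fiber $X_P$ is closed in $X$, so by definition of the subspace topology each $U_v$ lifts to some open $\tilde{U}_v \subset X(K_v)$ with $\tilde{U}_v \cap X_P(K_v) = U_v$.

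Next I would exploit hypothesis (1) to force any $K$-rational point inside the candidate $N$ to come from the fiber $X_P$. Enlarging $S'$ by a single auxiliary place $v_0 \in \Omega_K \setminus S$ if necessary, finiteness of $Y(K)$ lets me pick a $v_0$-adic open $V_{v_0} \subset Y(K_{v_0})$ separating $P$ from the remaining (finitely many) points of $Y(K)$. Define $N := \prod_{v \in S' \setminus \{v_0\}} \tilde{U}_v \times (\tilde{U}_{v_0} \cap f^{-1}(V_{v_0})) \times \prod_{v \notin S' \cup S} X(K_v)$. For any $Q \in N \cap X(K)$, continuity of $f$ at $v_0$ gives $f(Q) \in Y(K) \cap V_{v_0} = \{P\}$, so $Q \in X_P(K)$; its projection to the places of $S'$ then places $Q$ inside $W$, contradicting the choice of the $U_v$.

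For nonemptiness of $N \cap pr^S(X(\AA_K)^{\Br})$, I combine the hypothesis $\prod_{v \in S} X_P(K_v) \neq \emptyset$ with the nonemptiness of $W$ (each factor of the product is nonempty) to assemble an adelic point $(Q_v)_v \in X_P(\AA_K)$ with $Q_v \in U_v$ for $v \in S'$. By hypothesis (2), every $A \in \Br(X)$ is of the form $f^*B$ for some $B \in \Br(Y)$, hence $A(Q_v) = B(P)$ is the image of the single class $B(P) \in \Br(K)$ for every $v$; global reciprocity yields $\sum_v \inv_v(A(Q_v)) = 0$, placing $(Q_v)_v$ in $X(\AA_K)^{\Br}$ with projection in $N$. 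The main obstacle is the separation step that forces a rational point of $N$ back into the single fiber $X_P$: this is precisely where finiteness of $Y(K)$ is essential, and the trick is to refine only the single component at $v_0$ rather than attempting a global fiberwise separation (which would fail unless $f$ were, say, finite or an isomorphism onto its image).
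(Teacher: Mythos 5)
Your proof is correct and follows essentially the same route as the paper's: extend the open set witnessing the failure of weak approximation on the fiber $X_P$ to an open subset of $X(\AA_K^S)$, use finiteness of $Y(K)$ to force any rational point of that open set back into $X_P$, and use surjectivity of $f^*$ together with global reciprocity to conclude that every adelic point of $X_P$ lies in $X(\AA_K)^{\Br}$. The only immaterial differences are that the paper separates $P$ from the remaining points of $Y(K)$ by a Zariski open $V_{P}\subset Y$ with $V_{P}(K)=\{P\}$ imposed at all places of $S'$, whereas you impose a $v_0$-adic separation at a single auxiliary place, and that your appeal to properness/closedness of $X_P$ is unnecessary, since the subspace topology lets you extend the $U_v$ for any subvariety.
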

	
	\begin{proof}	
		By Assumption (\ref{fiber criterion for not wabm condition 3}), take a $P_0\in Y(K)$ such that the fiber $X_{P_0}$ does not satisfy weak approximation  off $S,$ and that $\prod_{v\in S} X_{P_0}(K_v)\neq \emptyset.$ Then there exist a finite nonempty subset $S'\subset\Omega_K\backslash S$ and a nonempty open subset $L=\prod_{v\in S'}U_v\times \prod_{v\notin S'}X_{P_0}(K_v)\subset X_{P_0}(\AA_K)$ such that $L\bigcap X_{P_0}(K)=\emptyset.$ 
		By Assumption (\ref{fiber criterion for not wabm condition 1}), the set $Y(K)$ is finite, so we can take a Zariski open subset $V_{P_0}\subset Y$ such that $V_{P_0}(K)=\{P_0\}.$ For any $v\in S',$ since $U_v$ is open in $X_{P_0}(K_v)\subset f^{-1}(V_{P_0})(K_v),$ we can take an open subset $W_v$ of $f^{-1}(V_{P_0})(K_v)$ such that $W_v\cap X_{P_0}(K_v)=U_v.$ 	
		Consider the open subset $N=\prod_{v\in S'}W_v\times \prod_{v\notin S'}X(K_v)\subset X(\AA_K),$ then $L\subset N.$ By the functoriality of Brauer--Manin pairing and Assumption (\ref{fiber criterion for not wabm condition 2}), we have $L\subset N\bigcap X(\AA_K)^{\Br}.$ So $N\bigcap X(\AA_K)^{\Br} \neq \emptyset.$ But $N\bigcap X(K)=N\bigcap X_{P_0}(K)=L\bigcap X_{P_0}(K)
		=\emptyset,$ which implies that $X$ does not satisfy weak approximation with Brauer--Manin obstruction off $S.$
	\end{proof}

	We use the following lemma to choose a dominant morphism from a given curve to $\PP^1.$

	\begin{lemma}\label{lemma choose base change morphism}
		Given a nontrivial extension of number fields $L/K,$ let $C$ be a smooth, projective, and geometrically connected curve defined over $K.$ Assume that the triple $(C,K,L)$ is of type $I$ (Definition \ref{definition curve of type}). For any finite $K$-subscheme $R\subset \PP^1\backslash\{0,\infty\},$  there exists a dominant $K$-morphism $\gamma\colon  C\to \PP^1$ such that $\gamma(C(L)\backslash C(K))=\{0\}\subset \PP^1(K),$ $\gamma(C(K))=\{\infty\}\subset \PP^1(K),$ and that $\gamma$ is \'etale over $R.$ 
	\end{lemma}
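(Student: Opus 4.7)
The plan is to realize $\gamma$ as the morphism associated to a rational function $f\in K(C)^\times$ via $\gamma=[f:1]$. The three required properties translate into conditions on the divisor of $f$: it should have a pole at every $P\in C(K)$, vanish at every $Q\in C(L)\backslash C(K)$, and for each $\bar K$-point $r$ of $R$ the function $f-r$ should have reduced zero divisor. Set $E=\sum_{P\in C(K)}[P]$, a $K$-rational effective divisor. For each $Q\in C(L)\backslash C(K)$, viewed in $C(\bar K)$ through a fixed embedding $L\hookrightarrow \bar K$, its image closed point $\tilde Q\in C$ has residue field a proper finite extension of $K$ contained in $L$. Let $F$ be the sum with multiplicity one of all such closed points $\tilde Q$; then $F$ is an effective $K$-rational divisor whose support over $\bar K$ is Galois-stable, contains $C(L)\backslash C(K)$, and is disjoint from the support of $E$.

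For $n\gg 0$, consider the Riemann--Roch space $V_n=L(nE-F)$. By Riemann--Roch, $\dim_K V_n=n\cdot\#C(K)-\deg F-g(C)+1$, and for $n$ large the divisor $nE-F$ is very ample, so $|V_n|$ is base-point-free and separates points and tangent vectors on $C$. Every $f\in V_n$ satisfies $\mathrm{div}(f)\geq F-nE$, hence (since $F$ and $nE$ have disjoint support) $\mathrm{ord}_{\tilde Q}(f)\geq 1$ for every $\tilde Q$ in the support of $F$, so $f$ vanishes on $C(L)\backslash C(K)$. For each $P\in C(K)$, the condition $\mathrm{ord}_P(f)\geq -n+1$ cuts out the proper $K$-hyperplane $L(nE-[P]-F)\subsetneq V_n$; over the infinite field $K$, the complement of their finite union is Zariski-dense. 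For each $\bar K$-point $r$ of $R$, the locus of $f\in V_n$ for which $f-r$ has a non-reduced zero divisor is cut out by the vanishing of a discriminant, hence is Zariski-closed; a Bertini-type generic smoothness argument, together with the very ampleness of $|V_n|$, shows that this locus is a proper subset of $V_n$ for $n$ large.

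Intersecting these finitely many nonempty Zariski-open conditions yields an $f\in V_n$ with all three properties. Setting $\gamma=[f:1]\colon C\to \PP^1$, the morphism is dominant (since $f$ is non-constant, as it has poles), satisfies $\gamma(C(K))=\{\infty\}$ (pole at each $P\in C(K)$), $\gamma(C(L)\backslash C(K))=\{0\}$ (vanishing on support of $F$), and is \'etale over $R$ (reduced zero divisor of $f-r$ for each $\bar K$-point $r$ of $R$), as required. The main obstacle is the \'etaleness condition: verifying that the discriminant loci are proper subsets of $V_n$ requires a Bertini-type generic smoothness argument combined with sufficient ampleness of $|nE-F|$, whereas the other two conditions are routine linear-algebraic genericity statements on the finite-dimensional $K$-vector space $V_n$.
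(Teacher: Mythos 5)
Your argument is essentially correct, and the first half (using a Riemann--Roch space to produce a function with poles along $C(K)$ and zeros along the Galois closure of $C(L)\backslash C(K)$) coincides with the paper's first step. Where you genuinely diverge is in securing \'etaleness over $R$: you try to build it into the generic choice of $f\in L(nE-F)$ via an incidence-variety/Bertini argument, and this is precisely the step you leave as an assertion (``a Bertini-type generic smoothness argument \dots shows that this locus is a proper subset''). That assertion is true --- for each $r\in R(\overline{K})$ the bad locus is the image in $V_n$ of the closed incidence set $\{(x,f): f(x)=r,\ df(x)=0\}\subset C\times V_n$, whose fibres over $x$ have codimension $2$ once $nE-F-2[x]$ is non-special, so the image has codimension $\geq 1$; one also needs the small remark that the resulting bad loci are only $\overline{K}$-closed, but $K$-points are Zariski dense in the affine space $V_n$, so a good $K$-rational $f$ still exists. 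The paper sidesteps all of this with a cheaper trick: it fixes any $\gamma_0$ with the prescribed zero/pole behaviour, notes that its branch locus is a fixed finite set, and then post-composes with a scaling automorphism $\varphi_{\lambda_0}^{-1}$ of $\PP^1$ (which fixes $0$ and $\infty$, hence preserves the first two conditions) chosen so that $\varphi_{\lambda_0}(R)$ misses that branch locus; since only finitely many $\lambda_0\in\overline{K}$ are excluded and $R$ avoids $0$ and $\infty$, a suitable $\lambda_0\in K^\times$ exists. Your route is more laborious but self-contained within the linear system; the paper's buys the \'etaleness condition for free by moving $R$ rather than moving the branch locus. If you keep your approach, you should write out the dimension count for the incidence variety, as it is the only non-routine point.
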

	
	\begin{proof}
		Let $K(C)$ be the function field of $C.$ % Let $M$ be the smallest Galois extension containing $L$ of $K,$ and let $\Gal(M/K)$ be the Galois group.
		For $C(K)$ and $C(L)$ are both finite nonempty sets and $C(L)\backslash C(K)\neq \emptyset,$ by Riemann-Roch theorem, we can choose a rational function $\phi\in K(C)^\times\backslash K^\times$ such that the set of its poles contains $C(K),$ and that the set of its zeros contains $C(L)\backslash C(K).$ %Replacing $\phi$ by $\prod_{\sigma\in \Gal(M/K)} \sigma(\phi),$ we can assume $\phi\in K(C)^\times\backslash K^\times.$
		This rational function $\phi$ gives a dominant $K$-morphism $\gamma_0\colon C\to \PP^1$ such that $\gamma_0(C(L)\backslash C(K))=\{0\}\subset \PP^1(K)$ and $\gamma_0(C(K))=\{\infty\}\subset \PP^1(K).$  We can choose an automorphism $\varphi_{\lambda_0}\colon \PP^1\to \PP^1, (u:v)\mapsto (\lambda_0 u:v)$ with $\lambda_0\in K^\times$ such that the branch locus of $\gamma_0$ has no intersection with $\varphi_{\lambda_0}(R).$ Let $\gamma= (\varphi_{\lambda_0})^{-1}\circ\gamma_0.$ Then the morphism $\gamma$ is \'etale over $R$ and satisfies other conditions. 
	\end{proof}

	The following lemma is well known.
	
	\begin{lemma}\label{lemma zero of Br B}
		Let $C$ be a curve over a number field, and let $B=C\times \PP^1.$ Then $\Br \overline{B}=0.$ 
	\end{lemma}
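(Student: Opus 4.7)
The plan is to factor the computation through the first projection $\pi \colon \overline{B} = \overline{C} \times_{\overline{K}} \PP^1_{\overline{K}} \to \overline{C}$, which exhibits $\overline{B}$ as a (trivial) $\PP^1$-bundle over $\overline{C}$. The first step is to reduce $\Br(\overline{B}) = 0$ to $\Br(\overline{C}) = 0$ by invoking the well-known fact that pullback along a Zariski-locally trivial projective bundle induces an isomorphism on Brauer groups. Concretely, from the Leray spectral sequence for $\pi$ and $\GG_m$ one has $R^1\pi_*\GG_m = \ZZ$ (generated by $\Ocal(1)$) and $R^2\pi_*\GG_m = 0$ (the fibers are $\PP^1_{\overline{K}}$, whose Brauer group vanishes), so $\pi^*\colon \Br(\overline{C}) \to \Br(\overline{B})$ is an isomorphism. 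For the trivial bundle this identification is immediate and requires no delicate spectral sequence analysis.

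The second step is to observe that $\overline{C}$ is a smooth projective curve over the algebraically closed field $\overline{K}$, hence $\Br(\overline{C})$ injects into $\Br(\overline{K}(C))$ by standard purity for Brauer groups on smooth curves. By Tsen's theorem, the function field $\overline{K}(C)$ is a $C_1$ field (it has transcendence degree $1$ over the algebraically closed field $\overline{K}$), so $\Br(\overline{K}(C)) = 0$. Combining the two steps gives $\Br(\overline{B}) \cong \Br(\overline{C}) = 0$.

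There is essentially no obstacle here: both ingredients are classical, and the triviality of the $\PP^1$-bundle means one could alternatively just apply the Künneth formula in étale cohomology for $\mu_n$ together with the computation $H^i_{\et}(\PP^1_{\overline{K}}, \mu_n)$ to reach the same conclusion. I would choose the $\PP^1$-bundle/Tsen route as the shortest and most transparent write-up.
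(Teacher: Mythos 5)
Your proposal is correct and follows essentially the same route as the paper: the paper likewise reduces $\Br(\overline{C}\times\overline{\PP^1})$ to $\Br(\overline{C})$ via the projection and then quotes \cite[III, Corollary 1.2]{Gr68} for the vanishing of the Brauer group of a curve over an algebraically closed field (whose proof is exactly the Tsen's theorem argument you spell out). You merely supply the details that the paper delegates to the citation.
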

	
	\begin{proof}
		By \cite[III, Corollary 1.2]{Gr68}, the Brauer group for a given curve over an algebraic closed field is zero. So $\Br(\overline{C}\times \overline{\PP^1})\cong \Br(\overline{C})=0.$
	\end{proof}

	\begin{definition}
		Let $C$ be a smooth, projective, and geometrically connected curve defined over a number field. We say that a morphism
		$\beta\colon X\to C$ is a \defi{Ch\^atelet surface bundle over the curve} $C,$ if
		\begin{itemize}
			\item $X$ is a smooth, projective, and geometrically connected variety,
			\item the morphism $\beta$ is faithfully flat and proper,
			\item the generic fiber of $\beta$ is a Ch\^atelet surface over the function field of $C.$
		\end{itemize}
	\end{definition}

	Next, we construct Ch\^atelet surface bundles over curves to give a negative answer to Question \ref{Question}.

	\subsection{Non-invariance of weak approximation with Brauer--Manin obstruction}
	
	For any number field $K,$ assuming Conjecture \ref{conjecture Stoll}, Liang \cite[Theorem 4.5]{Li18} found a quadratic extension $L,$ and constructed  a Ch\^atelet surface bundle over a curve to give a negative answer to Question \ref{question on WA1}. For a given number field $K,$ by choosing prime elements, Liang found a quadratic extension $L,$ and constructed a Ch\^atelet surface defined over $K$ such that the property of weak approximation is not invariant under the extension $L/K.$ Then choosing a higher genus curve, Liang combined this Ch\^atelet surface with the construction method of Poonen \cite{Po10} to get Liang's result. In this subsection, we generalize Liang's result for quadratic extensions to any nontrivial extension of number fields $L/K.$ 
	
	\begin{theorem}\label{theorem main result: non-invariance of weak approximation with BMO}
		For any nontrivial extension of number fields $L/K,$ and any finite subset $T\subset \Omega_L,$ assuming that Conjecture \ref{conjecture Stoll} holds over $K,$ there exist a curve $C$ and a Ch\^atelet surface bundle: $X\to C$  defined over $K$ such that
		\begin{itemize}
			\item $X$ has a $K$-rational point, and satisfies weak approximation with Brauer--Manin obstruction %, \'etale Brauer--Manin obstruction
			off $\infty_K,$
			\item $X_L$ does not satisfy weak approximation with Brauer--Manin obstruction %, \'etale Brauer--Manin obstruction
			off $T.$
		\end{itemize}
	\end{theorem}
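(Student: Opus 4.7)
The plan is to realize the informal sketch already outlined in the introduction by combining Poonen's Ch\^atelet bundle construction (Recall \ref{recall of Poonen's notation}, Proposition \ref{Poonen's main proposition}) with the two fibration criteria (Lemmas \ref{lemma fiber criterion for wabm} and \ref{lemma fiber criterion for not wabm}). The input data will be a curve $C$ of type $I$, a ``bad'' Ch\^atelet surface $V_0$ from Proposition \ref{proposition the valuation of Brauer group on local points are fixed outside S and take two value on S}, and a ``good'' auxiliary Ch\^atelet surface $V_\infty$ sharing the same parameter $a$, chosen so as to satisfy weak approximation via Remark \ref{remark birational to Hasse--Minkowski theorem}.

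\textbf{Step 1 (Choice of ingredients).} Using Conjecture \ref{conjecture Stoll} and Lemma \ref{lemma stoll conjecture}, fix a smooth, projective, geometrically connected curve $C/K$ with $(C,K,L)$ of type $I$. Apply Proposition \ref{proposition the valuation of Brauer group on local points are fixed outside S and take two value on S} with $T_0=T$ to obtain a Ch\^atelet surface $V_0\colon y^2-p_1z^2=P_0(x)$ over $K$ such that $V_0(K)\neq\emptyset$ and $V_{0L}$ fails weak approximation off $T$. Then, keeping the same $a=p_1$, choose a separable degree-$4$ polynomial $P_\infty(x)\in K[x]$ that has $x^2-p_1$ as a factor and shares no root with $P_0$; the surface $V_\infty\colon y^2-p_1z^2=P_\infty(x)$ satisfies weak approximation both over $K$ and over $L$ by Remark \ref{remark birational to Hasse--Minkowski theorem}.

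\textbf{Step 2 (Bundle construction).} Apply Lemma \ref{lemma choose base change morphism} to produce a dominant $K$-morphism $\gamma\colon C\to\PP^1$ with $\gamma(C(K))=\{\infty\}$, $\gamma(C(L)\setminus C(K))=\{0\}$, and $\gamma$ étale over a finite subscheme $R\subset\PP^1\setminus\{0,\infty\}$ chosen to contain the images of the loci where $P_0$ or $P_\infty$ would create singularities. Write $\gamma=[f:g]$ and set $B=C\times\PP^1$. Following Poonen, take $\Lcal=\pi_1^*(\gamma^*\Ocal_{\PP^1}(1))\otimes\pi_2^*\Ocal_{\PP^1}(2)$ and form the section
\[
s=f\cdot P_\infty(x)+g\cdot P_0(x)\in\Gamma(B,\Lcal^{\otimes 2}),
\]
and apply Recall \ref{recall of Poonen's notation} with this $s$ and $a=p_1$ to obtain a projective variety $X\to B$; the composition $\beta\colon X\to B\xrightarrow{\pi_1} C$ is the desired Ch\^atelet surface bundle. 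The construction ensures that the fiber of $\beta$ over $c\in C(K)$ is the surface $y^2-p_1z^2=f(c)P_\infty(x)$, which is $K$-isomorphic (up to absorbing the scalar $f(c)$ modulo squares, noting that $x^2-p_1$ remains a factor) to a Ch\^atelet surface satisfying weak approximation; and over $c\in C(L)\setminus C(K)$ the fiber $y^2-p_1z^2=g(c)P_0(x)$ inherits from $V_{0L}$ the failure of weak approximation off $T$ (Proposition \ref{proposition the valuation of Brauer group on local points are fixed outside S and take two value on S}). To apply Proposition \ref{Poonen's main proposition}, one checks $X(\AA_K)\neq\emptyset$ using weak approximation of $V_\infty$ at each place, and invokes Lemma \ref{lemma zero of Br B} to get $\Br\overline{B}=0$; this yields that $X$ is smooth, projective, geometrically connected, and $\alpha^*\colon\Br(B)\xrightarrow{\sim}\Br(X)$. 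Combined with $\Br(C\times\PP^1)\cong\Br(C)$ (via $\pi_1^*$, since $\Br(\PP^1_K)$ is killed by the section), this gives $\beta^*\colon\Br(C)\to\Br(X)$ surjective.

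\textbf{Step 3 (Conclusion via fibration).} For the first bullet, apply Lemma \ref{lemma fiber criterion for wabm} to $\beta\colon X\to C$ with $S=\infty_K$: $C(K)$ is finite (type $I$), $C$ satisfies weak approximation with Brauer--Manin obstruction off $\infty_K$ by Stoll's conjecture, and each fiber over $C(K)$ satisfies weak approximation by the choice of $V_\infty$; the $K$-point comes from any such fiber. For the second bullet, apply Lemma \ref{lemma fiber criterion for not wabm} to $\beta_L\colon X_L\to C_L$ with $S=T$: $C(L)$ is finite, $\beta_L^*$ is surjective by base-changing Step~2, and any chosen $P_0\in C(L)\setminus C(K)$ provides a fiber with the required local solvability at places in $T$ (since $V_0(K)\neq\emptyset$) and with the required failure of weak approximation off $T$ (from Proposition \ref{proposition the valuation of Brauer group on local points are fixed outside S and take two value on S}).

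The main obstacle is the bookkeeping in Step~2: one must ensure that the section $s$ is chosen so that \emph{every} closed-point fiber of $\beta$ is a geometrically integral Ch\^atelet surface with $X(\AA_K)\neq\emptyset$ (so Poonen's proposition applies), while simultaneously guaranteeing that fibers over $C(K)$ keep the factorization property making them satisfy weak approximation, and that fibers over $C(L)\setminus C(K)$ carry over the precise Brauer obstruction from $V_{0L}$. The freedom in choosing the étale locus $R$ in Lemma \ref{lemma choose base change morphism}, together with the flexibility to scale $P_\infty$ modulo squares, is what makes this bookkeeping go through.
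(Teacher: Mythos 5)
Your overall architecture is the same as the paper's: a type~$I$ curve obtained from Stoll's conjecture via Lemma \ref{lemma stoll conjecture}, the surface $V_0$ from Proposition \ref{proposition the valuation of Brauer group on local points are fixed outside S and take two value on S}, an auxiliary $V_\infty$ containing the factor $x^2-a$ so that Remark \ref{remark birational to Hasse--Minkowski theorem} applies, Poonen's construction over $B=C\times\PP^1$ pulled back along a morphism $\gamma$ separating $C(K)$ from $C(L)\setminus C(K)$, and the two fibration lemmas to conclude. (Your choice $T_0=T$ instead of the paper's $T\cup\infty_L$ is immaterial, since Lemma \ref{lemma fiber criterion for not wabm} can be applied with $S=T$ directly.)

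There is, however, a genuine flaw in your Step 2. The paper takes $s'=u_0^2\tilde{P}_\infty(x_0,x_1)+u_1^2\tilde{P}_0(x_0,x_1)\in\Gamma(\PP^1\times\PP^1,\Ocal(1,2)^{\otimes 2})$ and pulls it back by $(\gamma,\id)$; the squares on $u_0,u_1$ are not cosmetic. First, your section $s=f\cdot P_\infty(x)+g\cdot P_0(x)$ does not lie in $\Lcal^{\otimes 2}$: with $\Lcal=\pi_1^*(\gamma^*\Ocal_{\PP^1}(1))\otimes\pi_2^*\Ocal_{\PP^1}(2)$ one needs degree $2$ in the first factor, while $f,g$ contribute degree $1$, so Recall \ref{recall of Poonen's notation} and Proposition \ref{Poonen's main proposition} do not apply as stated. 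Second, and more importantly, with a linear dependence on $(f,g)$ the fiber of $\beta$ over $c\in C(L)\setminus C(K)$ is the quadratic twist $y^2-az^2=g(c)P_0(x)$ rather than $V_{0L}$ itself. For the $V_\infty$ side this is harmless (any scalar multiple of $P_\infty$ retains the factor $x^2-a$, so the fiber still satisfies weak approximation), but for the $V_0$ side it is fatal: if $g(c)$ is a nonsquare, the twisted surface need not carry the class $A$ with the local invariants computed in Lemma \ref{lemma: the valuation of Brauer group on local points are fixed outside S and take two value on S}, so Assumption (3) of Lemma \ref{lemma fiber criterion for not wabm} is unverified for that fiber. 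Your closing remark about ``scaling $P_\infty$ modulo squares'' does not address this, because the problem sits on the $P_0$ side. The repair is exactly the paper's device of using $u_0^2,u_1^2$, which makes the relevant fibers equal to $V_\infty$ and $V_{0}$ on the nose (up to a square scalar coming from the trivialization). Finally, note that Proposition \ref{Poonen's main proposition} also requires the degeneration locus $Z=\{s=0\}$ to be geometrically connected, not merely smooth; the paper verifies this by showing $\Lscr(Z)$ is ample and invoking \cite[Chapter III, Corollary 7.9]{Ha97}, a point your write-up omits.
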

	
	\begin{proof}
		Firstly, we construct two Ch\^atelet surfaces.  Let $T_0=T\cup \infty_L.$ By Proposition \ref{proposition the valuation of Brauer group on local points are fixed outside S and take two value on S}, let $V_0$ be a Ch\^atelet surface defined by some equation $y^2-az^2=P_0(x)$ over $K$ has the properties of Proposition \ref{proposition the valuation of Brauer group on local points are fixed outside S and take two value on S}. 	Let $P_\infty(x)=(1-x^2)(x^2-a),$ and let $V_\infty$ be the Ch\^atelet surface defined by $y^2-az^2=P_\infty(x).$ Choose any polynomial $P_0(x)\in K[x]$ coprime to $P_\infty(x)$ which exists as in the proof of Proposition \ref{proposition the valuation of Brauer group on local points are fixed outside S and take two value on S}.
		
		Secondly, we construct a Ch\^atelet surface bundle over a curve.
		Let $\tilde{P}_\infty(x_0,x_1)$ and $\tilde{P}_0(x_0,x_1)$ be the homogenizations of $P_\infty(x)$ and $P_0(x).$ Let $(u_0:u_1)\times(x_0:x_1)$ be the bi-homogeneous coordinates of $\PP^1\times\PP^1,$ and let $s'=u_0^2\tilde{P}_\infty(x_0,x_1)+u_1^2\tilde{P}_0(x_0,x_1)\in \Gamma(\PP^1\times\PP^1,\Ocal(1,2)^{\otimes2}).$ For $P_0(x)$ and $P_\infty(x)$ are coprime in $K[x],$ by Jacobian criterion, the locus $Z'$ defined by $s'=0$ in $\PP^1\times\PP^1$ is smooth. Then the branch locus of the composition $ Z'\hookrightarrow \PP^1\times\PP^1  \stackrel{pr_1}\to\PP^1,$ denoted by $R,$ is finite over $K.$ 
		Since Conjecture \ref{conjecture Stoll} holds over $K$ by assumption, using Lemma \ref{lemma stoll conjecture}, we can take a curve $C$ defined over $K$ such that the triple $(C,K,L)$ is of type $I.$ By Lemma \ref{lemma choose base change morphism}, we can choose a $K$-morphism $\gamma\colon  C\to \PP^1$ such that $\gamma(C(L)\backslash C(K))=\{0\}\subset \PP^1(K),$ $\gamma(C(K))=\{\infty\}\subset \PP^1(K),$ and that $\gamma$ is \'etale over $R.$ 
		Let $B=C\times \PP^1,$ and let $(\gamma,id)\colon B\to\PP^1\times \PP^1.$  Let $\Lcal=(\gamma,id)^*\Ocal(1,2),$ and let $s=(\gamma,id)^* (s')\in \Gamma(B,\Lcal^{\otimes2}).$ For $\gamma$ is \'etale over the branch locus  $R,$  the locus $Z$ defined by $s=0$ in $B$ is smooth. Since $Z$ is defined by the support of the global section $s,$ it is an effective divisor. The invertible sheaf $\Lscr (Z')$ on $\PP^1\times\PP^1$ is isomorphic to $\Ocal(2,4),$ which is a very ample sheaf on $\PP^1\times\PP^1.$ And $(\gamma,id)$ is a finite morphism, so the pull back of this ample sheaf is again ample, which implies that the invertible sheaf $\Lscr (Z)$ on $C\times\PP^1$ is ample. By \cite[Chapter III. Corollary 7.9]{Ha97}, the curve $Z$ is geometrically connected. So the curve $Z$ is smooth, projective, and geometrically connected. By Lemma \ref{lemma zero of Br B}, the Brauer group $\Br(\overline{B})=0.$  Let $X$ be the zero locus of $(1,-a,-s)\in\Gamma(B,\Ocal_B\oplus\Ocal_B\oplus\Lcal^{\otimes2})\subset\Gamma(B,\Sym^2\Ecal)$ in the projective space bundle $\Proj(\Ecal)$ with the natural projection $\alpha\colon  X\to B.$	 Using Proposition \ref{Poonen's main proposition}, the variety $X$ is smooth, projective, and geometrically connected.
		Let $\beta\colon X \stackrel{\alpha}\to B=C\times \PP^1 \stackrel{pr_1}\to C$ be the composition of $\alpha$ and $pr_1.$ Then the morphism $\beta$ is a Ch\^atelet surface bundle over the curve $C.$
		
		At last, we check that the variety $X$ has the properties.
		
		We show that  $X$ has a $K$-rational point. For any $P\in C(K),$ the fiber  $\beta^{-1}(P)\cong V_\infty.$ The surface $V_\infty$ has a $K$-rational point $(x,y,z)=(0,0,1),$ so the set $X(K)\neq \emptyset.$
		%For $\Br(V_\infty)/\Br(K)=0,$ according to \cite[Theorem B]{CTSSD87a,CTSSD87b},
		
		We show that  $X$ satisfies weak approximation with Brauer--Manin obstruction %, \'etale Brauer--Manin obstruction 
		off $\infty_K.$ 
		By Remark \ref{remark birational to Hasse--Minkowski theorem}, the surface $V_\infty$ satisfies weak approximation. So, for the morphism $\beta,$ 
		Assumption (\ref{fiber criterion for wabm condition 3}) of Lemma \ref{lemma fiber criterion for wabm} holds.
		Since Conjecture \ref{conjecture Stoll} holds for the curve $C,$ using Lemma \ref{lemma fiber criterion for wabm} for the morphism $\beta,$ the variety $X$ satisfies weak approximation with Brauer--Manin obstruction %, \'etale Brauer--Manin obstruction 
		off $\infty_K.$ 
		
		We show that  $X_L$ does not satisfy weak approximation with Brauer--Manin obstruction %, \'etale Brauer--Manin obstruction 
		off $T.$	
		By Proposition \ref{Poonen's main proposition}, the map $\alpha_L^*\colon\Br(B_L)\to \Br(X_L)$ is an isomorphism, so $\beta_L^*\colon\Br(C_L)\to \Br(X_L)$ is an isomorphism. By the choice of the curve $C$ and morphism $\beta,$ for any $Q\in C(L)\backslash C(K),$ the fiber $\beta^{-1}(Q)\cong V_{0L}.$ 
		By Proposition \ref{proposition the valuation of Brauer group on local points are fixed outside S and take two value on S}, the surface $V_{0L}$ does not satisfy weak approximation off $T\cup \infty_L.$ For $V_0(L)\neq\emptyset,$ by Lemma \ref{lemma fiber criterion for not wabm}, the variety $X_L$ does not satisfy weak approximation with Brauer--Manin obstruction %, \'etale Brauer--Manin obstruction 
		off $T\cup \infty_L.$ So it does not satisfy weak approximation with Brauer--Manin obstruction %, \'etale Brauer--Manin obstruction 
		off $T.$
	\end{proof}

	\section{An Explicit unconditional example}\label{section main example1}

	Let $K=\QQ$ and $L=\QQ(\sqrt{3}).$	In this section, we give an explicit unconditional (without assuming Conjecture \ref{conjecture Stoll}) example  for Theorem \ref{theorem main result: non-invariance of weak approximation with BMO}.

	\subsection{Choosing an elliptic curve} 
	Let $E$ be an elliptic curve defined over $\QQ$ by a homogeneous equation:
	$$w_1^2w_2=w_0^3-16w_2^3$$ with homogeneous coordinates $(w_0:w_1:w_2)\in \PP^2.$ This is an elliptic curve with complex multiplication. Its quadratic twist $E^{(3)}$ is isomorphic to an elliptic curve defined by a homogeneous equation:
	$w_1^2w_2=w_0^3-432w_2^3$ with homogeneous coordinates $(w_0:w_1:w_2)\in \PP^2.$ These elliptic curves $E$ and $E^{(3)}$ defined over $\QQ,$ are  of analytic rank $0.$ Then the Tate--Shafarevich group $\Sha(E,\QQ)$ is finite, so $E$ satisfies weak approximation with Brauer--Manin obstruction off $\infty_K.$ The Mordell-Weil groups $E(K)$ and $E^{(3)}(K)$ are both finite, so $E(L)$ is finite. Indeed, the Mordell-Weil groups $E(K)=\{(0:1:0)\}$ and $E(L)=\{(4:\pm 4\sqrt{3}:1),(0:1:0)\}.$ So the triple $(E,K,L)$ is of type $I.$

	%Then the Tate--Shafarevich group $\Sha(E,\QQ)$ and $\Sha(E^{(3)},\QQ)$ are both finite. The curves $E_K$ and $E_L$ satisfy weak approximation with Brauer--Manin obstruction off $\infty_K$ and $\infty_L$ respectively. The Mordell-Weil group $E(K)$ and $E(L)$ are both finite. Indeed,  the Mordell-Weil group $E(K)=\{(0:1:0)\}$ and $E(L)=\{(4:\pm 4\sqrt{3}:1),(0:1:0)\}.$ 

	\subsection{Choosing a dominant morphism}
	Let $\PP^2\backslash\{(0:1:0)\}\to \PP^1$ be a morphism over $\QQ$ given by $(w_0:w_1:w_2)\mapsto(w_0-4w_2:w_2).$ Composing the natural inclusion $E\backslash\{(0:1:0)\}\hookrightarrow \PP^2\backslash\{(0:1:0)\}$ with it, we get a morphism $E\backslash\{(0:1:0)\}\to \PP^1,$ which can be extended to a dominant morphism $\gamma\colon E\to \PP^1$  of degree $2.$ One can check that the morphism $\gamma$ maps $E(K)$ to $\{\infty\}=\{(1:0)\},$ and maps $(4:\pm 4\sqrt{3}:1)$ to $0$ point: $(0:1).$	By B\'ezout's Theorem \cite[Chapter I. Corollary 7.8]{Ha97} or Hurwitz's Theorem \cite[Chapter IV. Corollary 2.4]{Ha97}, the branch locus of $\gamma$ is $\{(1:0),(2\sqrt[3]{2}-4:1),(2\sqrt[3]{2}e^{2\pi i/3}-4:1),(2\sqrt[3]{2}e^{-2\pi i/3}-4:1)\}.$

	\subsection{Construction of a Ch\^atelet surface bundle}

	Let $P_\infty(x)=(1-x^2)(x^2-73),$ and let $P_0(x)=(5x^2+1)(5334x^2/5329+1/5329).$ Notice that these polynomials $P_\infty(x)$ and $P_0(x)$ are separable.
	Let $V_\infty$ be the Ch\^atelet surface given by $y^2-73z^2=P_\infty(x).$ As mentioned in Example \ref{example1: construction of V_0}, let $V_0$ be the Ch\^atelet surface given by $y^2-73z^2=P_0(x).$ 
	Let $\tilde{P}_\infty(x_0,x_1)$ and $\tilde{P}_0(x_0,x_1)$ be the homogenizations of $P_\infty(x)$ and $P_0(x).$ Let $(u_0:u_1)\times(x_0:x_1)$ be the bi-homogeneous coordinates of $\PP^1\times\PP^1,$ and let $s'=u_0^2\tilde{P}_\infty(x_0,x_1)+u_1^2\tilde{P}_0(x_0,x_1)\in \Gamma(\PP^1\times\PP^1,\Ocal(1,2)^{\otimes2}).$ For $P_0(x)$ and $P_\infty(x)$ are coprime in $K[x],$ by Jacobian criterion, the locus $Z'$ defined by $s'=0$ in $\PP^1\times\PP^1$ is smooth. Then the branch locus of the composition $ Z'\hookrightarrow \PP^1\times\PP^1  \stackrel{pr_1}\to\PP^1,$ denoted by $R,$ is finite, and contained in $\PP^1\backslash \{(1:0)\}.$
	Let $B=E\times \PP^1,$ and let $(\gamma,id)\colon B\to\PP^1\times \PP^1.$ Let $\Lcal=(\gamma,id)^*\Ocal(1,2),$ and let $s=(\gamma,id)^* (s')\in \Gamma(B,\Lcal^{\otimes2}).$ With the notation, we have the following lemma.

	\begin{lemma}
		The curve $Z$ defined by $s=0$ in $B$ is smooth, projective, and geometrically connected.
	\end{lemma}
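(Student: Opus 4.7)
The plan is to specialize the argument from the proof of Theorem \ref{theorem main result: non-invariance of weak approximation with BMO} to the explicit data of this section. I need to verify three things: smoothness of $Z$, projectivity of $Z$, and geometric connectedness of $Z$.

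The essential step, as in the general argument, is to check that $\gamma$ is \'etale over the branch locus $R$ of the composition $Z' \hookrightarrow \PP^1\times\PP^1 \stackrel{pr_1}\to \PP^1$. I would identify $R$ explicitly as the zero locus in $\PP^1$ of $P_\infty(x)P_0(x)$, namely $\pm 1, \pm\sqrt{73}$ (from $P_\infty$) together with the purely imaginary roots of $5x^2+1$ and $5334x^2+1$ (from $P_0$), all of which lie in $\QQ(\sqrt{73}, i)$. The branch locus of $\gamma$ has already been recorded as $\{(1:0),(2\sqrt[3]{2}\zeta-4:1) \mid \zeta^3=1\}$; it involves the cubic irrationality $\sqrt[3]{2}$, so the two loci are patently disjoint. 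Once this is established, smoothness of $Z$ follows exactly as in the general proof: viewing $Z$ as the preimage $(\gamma,\id)^{-1}(Z')$ of the smooth divisor $Z'$, at every point of $Z$ either $\gamma$ is \'etale at the $E$-coordinate or $pr_1$ is \'etale at the image in $Z'$, so $Z \cong Z'\times_{\PP^1} E$ is smooth.

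Projectivity is immediate, since $Z$ is a closed subscheme of $B = E\times\PP^1$. For geometric connectedness, I would observe that $\Lcal = (\gamma,\id)^*\Ocal(1,2)$ is ample, being the pullback of an ample line bundle under the finite morphism $(\gamma,\id)$, so the associated line bundle $\Lscr(Z)\cong\Lcal^{\otimes 2}$ is ample; then \cite[Chapter III. Corollary 7.9]{Ha97} applied over $\overline{K}$ yields geometric connectedness of the divisor $Z$ on the smooth projective surface $B$.

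The only step requiring any real calculation is the branch-locus disjointness, and even this is transparent once the two sets are displayed: the key point is that the ramification of $\gamma$ is controlled by the cubic equation $w_0^3 - 16 = 0$, whereas the ramification of $Z' \to \PP^1$ comes from the quadratic factors of $P_\infty$ and $P_0$. All remaining steps are direct transcriptions of arguments already appearing in the proof of Theorem \ref{theorem main result: non-invariance of weak approximation with BMO}, so the main obstacle is simply organizing the disjointness check cleanly.
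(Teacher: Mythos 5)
Your overall strategy is the same as the paper's: verify that the branch locus $R$ of $pr_1|_{Z'}\colon Z'\to\PP^1$ is disjoint from the branch locus of $\gamma\colon E\to\PP^1$, deduce smoothness of $Z\cong Z'\times_{\PP^1}E$, and obtain geometric connectedness from ampleness of $\Lscr(Z)$ together with \cite[Chapter III. Corollary 7.9]{Ha97}. However, your identification of $R$ is incorrect, and this is precisely the step that carries the content of the lemma. The morphism in question is the \emph{first} projection, to the $(u_0:u_1)$-line --- the factor into which $\gamma$ maps $E$. Its fibre over $(u_0:u_1)$ is the degree-$4$ divisor $u_0^2\tilde{P}_\infty(x_0,x_1)+u_1^2\tilde{P}_0(x_0,x_1)=0$ in the $x$-line, so $R$ is the locus of those $(u_0:u_1)$ for which this binary quartic in $(x_0:x_1)$ acquires a repeated root, i.e.\ the vanishing locus of its discriminant regarded as a polynomial in $u_0,u_1$. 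What you wrote down --- the zero locus of $P_\infty(x)P_0(x)$ --- is instead the branch locus of the \emph{second} projection $Z'\to\PP^1$ (the degree-$2$ cover obtained by solving $u_0^2P_\infty(x)=-u_1^2P_0(x)$ for $(u_0:u_1)$), which lives in the wrong copy of $\PP^1$ and is irrelevant to the smoothness of the fibre product over the $u$-line. (As a side remark, the roots of $5x^2+1$ do not lie in $\QQ(\sqrt{73},i)$ either.)

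The paper computes the correct $R$: since $R\subset\PP^1\setminus\{(1:0)\}$, its points $(u_0:1)$ satisfy one of $5329u_0^2-26670=0$, $389017u_0^2-1=0$, $27625536u_0^4+8577816u_0^2+5329=0$, each irreducible over $\QQ$, so every point of $R$ has degree $2$ or $4$ over $\QQ$, whereas the branch points of $\gamma$, namely $(1:0)$ and $(2\sqrt[3]{2}\zeta-4:1)$ with $\zeta^3=1$, have degree $1$ or $3$. So the disjointness conclusion does hold, and your ``quadratic versus cubic irrationality'' heuristic is morally the right one; but as written your argument compares the branch locus of $\gamma$ with the wrong set, so it does not establish smoothness of $Z$. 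To repair it you must actually compute (or at least control the residue degrees of) the discriminant locus of $u_0^2P_\infty+u_1^2P_0$ with respect to $x$.
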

	
	\begin{proof}
		For smoothness of $Z,$ we need to check that the branch locus $R$ %of $Z'\hookrightarrow \PP^1\times\PP^1  \stackrel{pr_1}\to\PP^1$ 
		does not intersect with the branch locus of  $\gamma\colon E\to \PP^1.$ %The first morphism $ Z'\to\PP^1$ is a dominant morphism of degree $4,$ and its branch locus 
		For $R$ is contained in $\PP^1\backslash \{(1:0)\},$ % As before, let $(u_0:u_1)$ be the homogeneous coordinates of $\PP^1.$ 	
		we can assume the homogeneous coordinate $u_1=1,$ then the point $(u_0:1)$ in $R$ satisfies one of the following equations: 
		$5329u_0^2-26670=0,~ 389017u_0^2 - 1=0,~27625536u_0^4 + 8577816u_0^2 + 5329=0.$ The polynomials of these equations are irreducible over $\QQ.$  By comparing the degree $[\QQ(u_0):\QQ]$ with the branch locus of $\gamma,$ we get the conclusion that these two branch loci do not intersect.
		The same argument as in the proof of Theorem \ref{theorem main result: non-invariance of weak approximation with BMO},	
		the locus $Z$ defined by $s=0$ in $B$ is geometrically connected. So it is smooth, projective, and geometrically connected.
	\end{proof}

	Let $X$ be the zero locus of $(1,-a,-s)\in\Gamma(B,\Ocal_B\oplus\Ocal_B\oplus\Lcal^{\otimes2})\subset\Gamma(B,\Sym^2\Ecal)$ in the projective space bundle $\Proj(\Ecal)$ with the natural projection $\alpha\colon  X\to B.$  By the same argument as in the proof of Theorem \ref{theorem main result: non-invariance of weak approximation with BMO}, the variety $X$ is smooth, projective, and geometrically connected. Let $ \beta\colon X \to E$ be the composition of $\alpha$ and $pr_1.$ Then it is a Ch\^atelet surface bundle over the curve $E.$ For this variety $X,$ we have the following proposition.

	\begin{prop}\label{example1: main result: non-invariance of weak approximation with BMO}
		For $K=\QQ$ and $L=\QQ(\sqrt{3}),$  the $3$-fold $X$ has the following properties.	
		\begin{itemize}
			\item $X$ has a $K$-rational point, and satisfies weak approximation with Brauer--Manin obstruction %, \'etale Brauer--Manin obstruction 
			off $\infty_K.$
			\item $X_L$ does not satisfy  weak approximation with Brauer--Manin obstruction %, \'etale Brauer--Manin obstruction 
			off $\infty_L.$
		\end{itemize}
	\end{prop}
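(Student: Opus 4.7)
The proposal is to follow the same two-pronged fibration argument used in the proof of Theorem \ref{theorem main result: non-invariance of weak approximation with BMO}, but apply it to the explicit bundle $\beta\colon X\to E$ constructed above, so that every hypothesis becomes unconditional. The key point is that the elliptic curve $E\colon w_1^2w_2=w_0^3-16w_2^3$ replaces the abstract curve $C$ produced by Lemma \ref{lemma stoll conjecture}, and by the Kolyvagin/Gross--Zagier theorem together with the finiteness of $\Sha(E,\QQ)$ and $\Sha(E^{(3)},\QQ)$ (both having analytic rank zero) we know that $E$ satisfies Conjecture \ref{conjecture Stoll} over $\QQ$, and that $E(\QQ)=\{(0:1:0)\}$, $E(L)=\{(0:1:0),(4:\pm 4\sqrt{3}:1)\}$ are finite with $E(\QQ)\subsetneq E(L)$. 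Thus $(E,K,L)$ is of type $I$, exactly as required in the abstract proof.

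First I would verify that $X(K)\neq\emptyset$. Since $\gamma$ sends the unique point $(0:1:0)\in E(K)$ to $(1:0)\in\PP^1$, the fiber of $\beta$ over this point is the Ch\^atelet surface $V_\infty$ cut out by $y^2-73z^2=(1-x^2)(x^2-73)$, which contains the $\QQ$-point $(x,y,z)=(0,0,1)$. Next, to establish weak approximation with \BMo off $\infty_K$, I would apply Lemma \ref{lemma fiber criterion for wabm} to $\beta$. Assumption (\ref{fiber criterion for wabm condition 1}) holds since $E(K)$ is finite; assumption (\ref{fiber criterion for wabm condition 2}) holds by the analytic rank zero input above; and assumption (\ref{fiber criterion for wabm condition 3}) holds because $V_\infty$ satisfies weak approximation by Remark \ref{remark birational to Hasse--Minkowski theorem} (the polynomial $P_\infty(x)=(1-x^2)(x^2-73)$ contains the factor $x^2-73$).

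For the failure over $L$, I would apply Lemma \ref{lemma fiber criterion for not wabm} to $\beta_L\colon X_L\to E_L$. Finiteness of $E(L)$ gives (\ref{fiber criterion for not wabm condition 1}). For (\ref{fiber criterion for not wabm condition 2}), the smoothness of $Z$ just proved combined with $\Br(\overline{B_L})=\Br(\overline{E}\times\overline{\PP^1})=0$ (Lemma \ref{lemma zero of Br B}) lets me invoke Proposition \ref{Poonen's main proposition}, showing that $\alpha_L^*\colon \Br(B_L)\to\Br(X_L)$ is an isomorphism; composing with $pr_1^*$ gives surjectivity of $\beta_L^*$. For (\ref{fiber criterion for not wabm condition 3}), the construction of $\gamma$ sends the point $Q=(4:4\sqrt{3}:1)\in E(L)\setminus E(K)$ to $(0:1)\in\PP^1$, so the fiber $\beta^{-1}(Q)$ is isomorphic to $V_{0L}$, where $V_0$ is exactly the Ch\^atelet surface of Example \ref{example1: construction of V_0}. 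By that example $V_{0L}$ fails weak approximation off any $T_0$ disjoint from the places of $L$ above $73$; taking $T_0=\infty_L$ (which is indeed disjoint from the $73$-adic places) gives the desired failure. The non-emptiness of $\prod_{v\in\infty_L}V_{0L}(L_v)$ follows from the existence of the $L$-rational point $(0,1/73,0)$ on $V_{0L}$.

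The step most likely to require care is verifying that the branch loci avoidance computation (already carried out in the preceding lemma to establish smoothness of $Z$) is sufficient to place $X$ in the exact hypotheses of Proposition \ref{Poonen's main proposition}, and that Example \ref{example1: construction of V_0} really does apply with $T_0=\infty_L$; both are bookkeeping rather than hard mathematics, since the $73$-adic places of $L$ are genuinely disjoint from the archimedean ones. Everything else is a direct transcription of the abstract argument with the concrete numerical data plugged in.
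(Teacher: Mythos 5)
Your proposal is correct and follows exactly the route the paper intends: the paper's own proof of this proposition is literally the statement that it is the same as the proof of Theorem \ref{theorem main result: non-invariance of weak approximation with BMO}, with the elliptic curve $E$ (type $I$ via Kolyvagin/Gross--Zagier), the fibers $V_\infty$ and $V_{0L}$, and the two fibration lemmas playing the same roles you assign them. Your explicit checks (the point $(0,0,1)$ on $V_\infty$, the choice $T_0=\infty_L$ disjoint from the $73$-adic places, and the local points on $V_{0L}$ at the archimedean places) are exactly the bookkeeping the paper leaves implicit.
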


	\begin{proof}
		This is the same as in the proof of Theorem \ref{theorem main result: non-invariance of weak approximation with BMO}.
	\end{proof}

	The $3$-fold $X$ that we constructed, has an affine open subvariety defined by the following equations, which is a closed subvariety of $\AA^5$ with affine coordinates $(x,y,z,x',y').$
	\begin{equation*}
		\begin{cases}
			y^2-73z^2=(1-x^2)(x^2-73)(x'-4)^2+(5x^2+1)(5334x^2/5329+1/5329)\\ 
			{y'}^2={x'}^3-16
		\end{cases}.
	\end{equation*}

	%
	%\begin{proof}
	%	By the same argument as in the proof of Theorem \ref{theorem main result: non-invariance of the Hasse principle with BMO for odd degree}, the variety $X$ is smooth, projective, and geometrically connected. 
	%	For the surface $V_\infty$ has a $K$-rational point $(x,y,z)=(0,2,0),$ the set $X(K)\neq \emptyset.$
	%	For $\Br(V_\infty)/\Br(K)=0,$ according to \cite[Theorem B]{CTSSD87a,CTSSD87b}, the surface $V_\infty$ satisfies weak approximation.  Because the curve $E$ satisfies Conjecture \ref{conjecture Stoll}, using Lemma \ref{lemma fiber criterion for wabm} for the morphism $\beta,$ the variety $X$ has a $K$-rational point, and satisfies weak approximation with Brauer--Manin obstruction, \'etale Brauer--Manin obstruction off $\infty_K.$ 
	%	
	%	By Proposition \ref{Poonen's main proposition}, we have $\alpha_L^*\colon\Br(B_L)\to \Br(X_L)$ is an isomorphism. By construction, we have $\beta^{-1}(E(L))=V_{\infty L}\bigcup V_{0 L}\bigcup V_{0 L}.$ 
	%	By Example \ref{example1: construction of V_0} and Proposition \ref{proposition the valuation of Brauer group on local points are fixed outside S and take two value on S}, the surface $V_{0L}$ does not satisfy weak approximation off $\infty_L$ and the set $V_0(L)\neq\emptyset.$ By Lemma \ref{lemma fiber criterion for not wabm}, the variety $X$ does not satisfy weak approximation with Brauer--Manin obstruction, \'etale Brauer--Manin obstruction off $\infty_L.$ 
	%\end{proof}

	\begin{footnotesize}
		\noindent\textbf{Acknowledgements.} The author would like to thank his thesis advisor Y. Liang for proposing the related problems, papers and many fruitful discussions, and thank  the anonymous
		referees for their careful scrutiny and valuable suggestions.  The author is partially supported by NSFC Grant No. 12071448.
	\end{footnotesize}
	
	% \bib, bibdiv, biblist are defined by the amsrefs package.
	\begin{bibdiv}
		\begin{biblist}
			
			\bib{CLX18}{article}{
				author={Cao, Y.},
				author={Liang, Y.},
				author={Xu, F.},
				title={Arithmetic purity of strong approximation for homogeneous
					spaces},
				date={2018},
				journal={Preprint, arXiv:1701.07259v3 [math.AG]},
			}
			
			\bib{CT03}{book}{
				author={Colliot-Th\'el\`ene, J.-L.},
				title={Points rationnels sur les fibrations. {I}n},
				subtitle={Higher dimensional varieties and rational points},
				series={Bolyai Society Mathematical studies},
				publisher={Springer-Verlag},
				date={2003},
				volume={12},
				note={pp. 171-221},
			}
			
			\bib{CT10}{article}{
				author={Colliot-Th\'el\`ene, J.-L.},
				title={Z\'ero-cycles de degr\'e 1 sur les solides de {P}oonen},
				language={French},
				date={2010},
				journal={Bull. Soc. Math. France},
				volume={138},
				pages={249\ndash 257},
			}
			
			\bib{CTSSD87a}{article}{
				author={Colliot-Th\'el\`ene, J.-L.},
				author={Sansuc, J.-J.},
				author={Swinnerton-Dyer, S.},
				title={Intersections of two quadrics and {C}h\^atelet surfaces {I}},
				date={1987},
				journal={J. Reine Angew. Math.},
				volume={373},
				pages={37\ndash 107},
			}
			
			\bib{CTSSD87b}{article}{
				author={Colliot-Th\'el\`ene, J.-L.},
				author={Sansuc, J.-J.},
				author={Swinnerton-Dyer, S.},
				title={Intersections of two quadrics and {C}h\^atelet surfaces {II}},
				date={1987},
				journal={J. Reine Angew. Math.},
				volume={374},
				pages={72\ndash 168},
			}
			
			\bib{CTX13}{article}{
				author={Colliot-Th\'el\`ene, J.-L.},
				author={Xu, F.},
				title={Strong approximation for the total space of certain quadric
					brations},
				date={2013},
				journal={Acta Arith.},
				volume={157},
				pages={169\ndash 199},
			}
			
			\bib{CX18}{article}{
				author={Cao, Y.},
				author={Xu, F.},
				title={Strong approximation with brauer-manin obstruction for toric
					varieties},
				date={2018},
				journal={Ann. Inst. Fourier (Grenoble)},
				volume={5},
				pages={1879\ndash 1908},
			}
			
			\bib{Fa83}{article}{
				author={Faltings, G.},
				title={Endlichkeitss{\"a}tze f{\"u}r abelsche {V}ariet{\"a}ten {\"u}ber
					{Z}ahlk{\"o}rpern},
				date={1983},
				journal={Invent. math.},
				volume={73},
				pages={349\ndash 366},
			}
			
			\bib{Gr68}{book}{
				author={Grothendieck, A.},
				title={Le groupe de {B}rauer {III}: {E}xemples et compl\'ements. {I}n},
				subtitle={Dix expos\'es sur la cohomologie des sch\'emas},
				language={French},
				series={Advanced Studies in Pure Mathematics},
				publisher={North-Holland},
				date={1968},
				volume={3},
				note={pp. 88-188},
			}
			
			\bib{GZ86}{article}{
				author={Gross, B.},
				author={Zagier, D.},
				title={Heegner points and derivatives of {L}-series},
				date={1986},
				journal={Invent. Math.},
				volume={84},
				pages={225\ndash 320},
			}
			
			\bib{Ha97}{book}{
				author={Hartshorne, R.},
				title={Algebraic geometry},
				series={Graduate Texts in Mathematics},
				publisher={Springer-Verlag},
				date={1997},
				volume={52},
			}
			
			\bib{Ko90}{book}{
				author={Kolyvagin, V.},
				title={Euler systems. {I}n},
				subtitle={The {G}rothendieck festschrift {II}},
				series={Progress in Mathematics},
				publisher={Birkh{\"a}user},
				date={1990},
				volume={87},
				note={pp. 435-483},
			}
			
			\bib{Ko91}{book}{
				author={Kolyvagin, V.},
				title={On the {M}ordell-{W}eil and the {S}hafarevich-{T}ate group of
					modular elliptic curves. {I}n},
				subtitle={Proceedings of the international congress of mathematicians},
				publisher={Springer-Verlag},
				date={1991},
				volume={I},
				note={pp. 429-436},
			}
			
			\bib{Li18}{article}{
				author={Liang, Y.},
				title={Non-invariance of weak approximation properties under extension
					of the ground field},
				date={2018},
				journal={Michigan Math. J.},
			}
			
			\bib{Ma71}{book}{
				author={Manin, Y.},
				title={Le groupe de {B}rauer-{G}rothendieck en g\'eom\'etrie
					diophantienne. {I}n},
				subtitle={Actes du {C}ongr\`es {I}nternational des {M}ath\'ematiciens},
				language={French},
				publisher={Gauthier-Villars},
				date={1971},
				volume={1},
				note={pp. 401-411},
			}
			
			\bib{Po09}{article}{
				author={Poonen, B.},
				title={Existence of rational points on smooth projective varieties},
				date={2009},
				journal={J. Eur. Math. Soc.},
				volume={11},
				pages={529\ndash 543},
			}
			
			\bib{Po10}{article}{
				author={Poonen, B.},
				title={Insufficiency of the {B}rauer-{M}anin obstruction applied to
					\'etale covers},
				date={2010},
				journal={Ann. of Math.},
				volume={171},
				pages={2157\ndash 2169},
			}
			
			\bib{Se79}{book}{
				author={Serre, J.-P.},
				title={Local fields},
				series={Graduate Texts in Mathematics},
				publisher={Springer-Verlag},
				date={1979},
				volume={67},
			}
			
			\bib{Si09}{book}{
				author={Silverman, J.},
				title={The arithmetic of elliptic curves},
				series={Graduate Texts in Mathematics},
				publisher={Springer-Verlag},
				date={2009},
				volume={106},
			}
			
			\bib{Sk01}{book}{
				author={Skorobogatov, A.},
				title={Torsors and rational points},
				series={Cambridge Tracts in Mathematics},
				publisher={Cambridge University Press},
				date={2001},
				volume={144},
			}
			
			\bib{St07}{article}{
				author={Stoll, M.},
				title={Finite descent obstructions and rational points on curves},
				date={2007},
				journal={Algebra Number Theory},
				volume={1},
				pages={349\ndash 391},
			}
			
			\bib{Wa96}{article}{
				author={Wang, L.},
				title={Brauer-{M}anin obstruction to weak approximation on abelian
					varities},
				date={1996},
				journal={Israel J. Math.},
				volume={94},
				pages={189\ndash 200},
			}
			
			\bib{Wu21}{article}{
				author={Wu, H.},
				title={Non-invariance of the {B}rauer-{M}anin obstruction for surfaces},
				date={2021},
				journal={Preprint, arXiv:2103.01784v2 [math.NT]},
			}
			
			\bib{Wu22a}{article}{
				author={Wu, H.},
				title={Arithmetic of {C}h\^atelet surfaces under extensions of base
					fields},
				date={2022},
				journal={Preprint, arXiv:2203.09156 [math.NT]},
			}
			
		\end{biblist}
	\end{bibdiv}
\end{document}